\newtheorem{thm}{Theorem}[section]
\newtheorem{lem}[thm]{Lemma}
\newtheorem{lem-dfn}[thm]{Lemma-Definition}
\newtheorem{prop}[thm]{Proposition}
\newtheorem{cor}[thm]{Corollary}
\theoremstyle{definition}
\newtheorem{defn}[thm]{Definition}
\newtheorem{exam}[thm]{Example}
\newtheorem{quest}[thm]{Question}
\newtheorem{prob}[thm]{Problem}
\newtheorem*{acknowledgement}{Acknowledgement}
\theoremstyle{remark}
\numberwithin{equation}{section}
\DeclareMathOperator{\Supp}{Supp}
\DeclareMathOperator{\Spec}{Spec}
\DeclareMathOperator{\spec}{Spec}
\DeclareMathOperator{\Min}{Min}
\DeclareMathOperator{\Ext}{Ext}
\DeclareMathOperator{\Hom}{Hom}
\DeclareMathOperator{\tr}{trace}
\DeclareMathOperator{\res}{res}
\DeclareMathOperator{\CMtype}{CMtype}
\newcommand{\m}{\mathfrak m}
\newcommand{\fra}{{\mathfrak a}}
\newcommand{\frp}{{\mathfrak p}}
\newcommand{\frq}{{\mathfrak q}}
\begin{document}
\title{Ulrich ideals on rational triple points of dimension two}
\author{Kyosuke Maeda}
\address[Kyosuke Maeda]{Department of Mathematics, 
College of Humanities and Sciences, 
Nihon University, Setagaya-ku, Tokyo, 156-8550, Japan}
\email{maeda127k@gmail.com}
\author{Ken-ichi Yoshida}
\address[Ken-ichi Yoshida]{Department of Mathematics, 
College of Humanities and Sciences, 
Nihon University, Setagaya-ku, Tokyo, 156-8550, Japan}
\email{yoshida.kennichi@nihon-u.ac.jp}
\thanks{The second author was partially supported 
by JSPS Grant-in-Aid for Scientific Research (C) 
Grant Numbers, 24K06678}
\subjclass[2000]{Primary 13B22; Secondary 14B05}
\date{\today}
\keywords{Ulrich ideal, rational singularity, quotient singularity}
\begin{abstract}
In this paper, we prove the canonical trace ideal $\tr_A(\omega_A)$ 
is an Ulrich ideal for any two-dimensional rational triple point $A$. 
Using this, we classify all Ulrich ideals on rational triple points. 
\par
Moreover, we show that if $(A,\m)$ is a two-dimensional quotient singularity with the multiplicity $e \ge 4$  
then $\m$ is the unique Ulrich ideal of $A$.   
As a result, we can classify all Ulrich ideals of $A$ if $A$ is  
either a rational triple point or a quotient singularity. 
\end{abstract}
\maketitle
\section{Introduction}

Let $(A,\m)$ be a Cohen-Macaulay local ring.  
An $\m$-primary ideal $I \subset A$ is called an \textit{Ulrich ideal}
if $I^2=QI$ holds for some minimal reduction $Q$ of $I$ and $I/I^2$ is a free $A/I$-module; see \cite{GOTWY1}. 
Let ${\mathcal X}_A$ denote the set of all Ulrich ideals of $A$. 

\par 
Goto et al.~\cite{GOTWY1} introduced the notion of Ulrich ideals and 
gave their several properties. 
In \cite{GOTWY2}, they studied Ulrich ideals on two-dimensional 
rational singularities $A$ and prove that any Ulrich ideal $I \subset A$ is an integrally closed ideal represented on the minimal resolution; 
see \cite[Theorem 6.2]{GOTWY2}.  
Moreover, in \cite[Theorem 1.4]{GOTWY2}, they classified all Ulrich ideals 
in a Gorenstein rational  singularity (that is, a rational singularity with multiplicity $2$) using McKay correspondence, 
and gave a criterion for Ulrich ideals of non-Gorenstein rational 
singularities in terms of geometric words; see \cite[Theorem 6.4]{GOTWY2}.  
\par \vspace{2mm}
The main purpose of this paper is to classify all Ulrich ideals of 
two-dimensional rational  triple points (i.e. rational singularities with  multiplicity $3$) and quotient singularities.  

\par \vspace{2mm}
Let us explain the organization of the paper. 
In Section 2, we classify all Ulrich ideals of rational  triple points $A$. 
Let $\tr_A(\omega_A)$ be the canonical trace ideal of $A$  and put $\res(A)=\ell(A/\tr_A(\omega_A))$, 
the residue of $A$; see 
Definition \ref{Def-Trace} for details. 
One of the main results in this paper is the following. 

\begin{thm}[\textrm{See Section 3}]  \label{Main-RTP} 
Let $(A,\m)$ be a two-dimensional rational  triple point. 
Then there exist  a minimal system of generators $x_1,\ldots, x_n$ of $\m$ and an integer $c \ge 0$ such that
\begin{enumerate}
\item $\tr_A(\omega_A)=(x_1,\ldots,x_{n-1},x_n^{c+1})$ is an Ulrich ideal. 
\item ${\mathcal X}_A=\{J \,|\, J \supset \tr_A(\omega_A)\}$. 
In particular, $\res(A)=\sharp({\mathcal X}_A)=c+1$.
\item  $A$ is nearly Gorenstein $($see Definition $\ref{NG-Gor}$$)$ if and only if $\sharp ({\mathcal X}_A)=1$. 
\end{enumerate}
\end{thm}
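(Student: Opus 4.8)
The plan is to transport the question to the minimal resolution $\pi\colon X\to\Spec A$ and reduce it to a finite combinatorial analysis of anti-nef cycles, feeding in the two structural results of \cite{GOTWY2}: by \cite[Theorem 6.2]{GOTWY2} every Ulrich ideal $I$ of a two-dimensional rational singularity is integrally closed and represented on the minimal resolution, i.e.\ $I=I_Y:=H^0\bigl(X,\mathcal O_X(-Y)\bigr)$ for a unique anti-nef cycle $Y$ supported on the exceptional set $E$, and by \cite[Theorem 6.4]{GOTWY2} whether $I_Y$ is an Ulrich ideal is governed by an explicit numerical criterion on $Y$. For a rational triple point the fundamental cycle satisfies $-Z^2=3$, so $\emb(A)=-Z^2+1=4$ and $n=4$; such an $A$ is never Gorenstein (a Gorenstein rational surface singularity is a rational double point), hence $\tr_A(\omega_A)\subsetneq\mathfrak m$, and $\tr_A(\omega_A)$ is $\mathfrak m$-primary because the singularity is isolated; finally $A$ has minimal multiplicity, so $\mathfrak m=I_Z\in\mathcal X_A$. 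Since the dual graphs of rational triple points are classified, the combinatorics involved is finite, and I would run the argument type by type while extracting the uniform conclusions (1)--(3).

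For (1) I would first locate $\tr_A(\omega_A)$ on the resolution: writing $\omega_A$ as an honest ideal and using $\tr_A(\omega_A)=\omega_A\cdot\Hom_A(\omega_A,A)$ together with the fact that products of integrally closed ideals on a rational surface singularity remain integrally closed (Lipman), one sees that $\tr_A(\omega_A)=I_W$ for a computable anti-nef cycle $W\ge Z$ attached to the canonical/non-Gorenstein data. The content of (1) is then twofold: (i) $W$ satisfies the criterion of \cite[Theorem 6.4]{GOTWY2}, so $\tr_A(\omega_A)=I_W$ is an Ulrich ideal; and (ii) choosing the minimal system $x_1,\dots,x_n$ of $\mathfrak m$ so that $x_n$ is the generator not forced into the smaller ideal, one obtains $\tr_A(\omega_A)=(x_1,\dots,x_{n-1},x_n^{c+1})$, with $c\ge 0$ read off from $W-Z$ (essentially the length of the relevant chain of vertices in the dual graph). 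A key structural point, verified over the classification, is that the anti-nef cycles $Y$ with $Z\le Y\le W$ form a single chain $Z=W_1<W_2<\dots<W_{c+1}=W$.

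For (2), the inclusion $\{J\mid J\supseteq\tr_A(\omega_A)\}\subseteq\mathcal X_A$ follows once each $W_j$ is seen to meet the criterion of \cite[Theorem 6.4]{GOTWY2}, so that $I_{W_j}=(x_1,\dots,x_{n-1},x_n^{\,j})$ is an Ulrich ideal for $1\le j\le c+1$; since $x_1,\dots,x_n$ is a minimal generating set, $A/(x_1,\dots,x_{n-1})$ is a local ring in which every ideal is a power of the principal maximal ideal $(\overline{x_n})$, so the ideals $J$ with $\tr_A(\omega_A)\subseteq J\subsetneq A$ are exactly these $I_{W_j}$, and $\ell(A/\tr_A(\omega_A))=c+1$. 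The reverse inclusion $\mathcal X_A\subseteq\{J\mid J\supseteq\tr_A(\omega_A)\}$ is the core of the theorem: given an arbitrary Ulrich ideal $I=I_Y$ I must show $Y\le W$, equivalently $I\supseteq\tr_A(\omega_A)$. Here I would feed the defining constraints $I_Y^2=QI_Y$ and $I_Y/I_Y^2$ free over $A/I_Y$, the numerical conditions of \cite[Theorem 6.4]{GOTWY2}, and the additional constraints on Ulrich ideals from \cite{GOTWY1}, into an analysis of $Y$ on the dual graph; these should force $Y$ onto the chain through $Z$ and below the canonical cycle $W$. Combining the two inclusions gives $\mathcal X_A=\{J\mid J\supseteq\tr_A(\omega_A)\}$ and $\sharp(\mathcal X_A)=\res(A)=c+1$.

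Part (3) is then immediate: $\sharp(\mathcal X_A)=1$ iff $c=0$ iff $\tr_A(\omega_A)=\mathfrak m$, and since $\tr_A(\omega_A)\subseteq\mathfrak m$ always ($A$ not being Gorenstein), $\tr_A(\omega_A)=\mathfrak m$ is equivalent to $\tr_A(\omega_A)\supseteq\mathfrak m$, i.e.\ to $A$ being nearly Gorenstein. The step I expect to be the main obstacle is the reverse inclusion in (2): a priori the criterion of \cite[Theorem 6.4]{GOTWY2} might admit Ulrich cycles $Y$ branching off in the dual graph rather than lying on the chain from $Z$ to $W$, and ruling all of these out --- along with pinning down that $\tr_A(\omega_A)$ is exactly the bottom of that chain --- appears to require either a clean structural lemma on anti-nef cycles for graphs with $-Z^2=3$ or a careful finite case check over the classified list of rational triple points.
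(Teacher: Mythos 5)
Your outline transports everything to the minimal resolution and plans to run the numerical criterion of \cite[Theorem 6.4]{GOTWY2} over the Artin--Tyurina list, but the step you yourself flag as ``the core'' --- that every Ulrich ideal $I$ satisfies $I\supseteq\tr_A(\omega_A)$ --- is never actually proved: you only say that ruling out Ulrich cycles branching off the chain from $Z_0$ to $W$ ``appears to require either a clean structural lemma or a careful finite case check,'' and you supply neither. That is a genuine gap, and it is exactly the point where the paper takes a different, purely algebraic route: since a rational triple point has minimal multiplicity and $\CMtype(A)=2$, every Ulrich ideal has $\mu(I)=\CMtype(A)+2\ge 4$ by \cite[Corollary 2.12]{GTT}, and then \cite[Proposition 2.12]{GK} gives $I\supseteq\tr_A(\omega_A)$ directly (Proposition \ref{Ulrich-Can}); no dual-graph analysis of the Ulrich cycles is needed at all. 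Your planned combinatorial replacement for this is not obviously false, but as written it is a statement of intent, not an argument.

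A second, smaller gap is the identification of $\tr_A(\omega_A)$ itself. You propose to locate it on the resolution via $\tr_A(\omega_A)=\omega_A\cdot\Hom_A(\omega_A,A)$ and Lipman's results on integrally closed ideals, and you assert (``verified over the classification'') that it has the shape $(x_1,\dots,x_{n-1},x_n^{c+1})$ and that the anti-nef cycles between $Z_0$ and $W$ form a chain --- but none of this is carried out. The paper gets it for free from the Hilbert--Burch structure: $\CMtype(A)=2$ and $e_0(A)=3$ give a $2\times 3$ presentation matrix $\mathbb{M}$ (Tyurina's explicit equations), and Lemma \ref{Burch} (i.e.\ \cite[Corollary 3.4]{HHS}) yields $\tr_A(\omega_A)=I_1(\mathbb{M})A=(x,y,z,t^a)$ in each case; then the ideals containing it are exactly $(x,y,z,t^i)$, $1\le i\le a$, because $A/(x,y,z,t^a)\cong k[t]/(t^a)$, and each such ideal is checked to be Ulrich by exhibiting $Q_i$ with $J_i^2=Q_iJ_i$ and verifying $e_0(J_i)=(\mu(J_i)-1)\ell(A/J_i)$ via Lemma \ref{CriterionUlrich}. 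Your part (3) is fine once (1) and (2) are in place, but as it stands your proposal leaves both the containment $\mathcal{X}_A\subseteq\{J\mid J\supseteq\tr_A(\omega_A)\}$ and the explicit computation of $\tr_A(\omega_A)$ unestablished.
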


\par \vspace{1mm}
In order to prove the theorem, we use a classification theorem for rational triple point by Artin \cite{Ar} and Tyurina \cite{Tyu} and 
calculate the canonical trace ideal $\tr_A(\omega_A)$. 
The key point of our argument is any rational triple point has 
$\CMtype=2$. 
The complete list of Ulrich ideals will be given in Section 3.

\par \vspace{1mm}
In Section 4, we study Ulrich ideals for quotient singularities using 
\cite[Theorem 6.4]{GOTWY2} and Brieskorn \cite{Br}. 
The main result in this section is the following. 

\begin{thm}[\textrm{See Theorem \ref{UQuotMult4}, Corollary 
\ref{QuotUlrich}}] \label{UQ}
If $A$ is a two-dimensional quotient singularity, then 
$\sharp ({\mathcal X}_A) \le 2$.   
\par 
Especially, if the multiplicity $e_0(A) \ge 4$, then 
${\mathcal X}_A=\{\m\}$. 
\end{thm}

\par \vspace{1mm}
Ding \cite{Ding} classified all two-dimensional 
nearly Gorenstein quotient singularities. 
Two-dimensional nearly Gorenstein quotient singularities have 
the unique Ulrich ideal $\m$. 
But the converse is not true in general. 
In fact, there exists a quotient singularity $A$ such that 
${\mathcal X}_A=\{\m\}$ but $A$ is not nearly Gorenstein; 
see Example \ref{Non-NG}. 
\par \vspace{1mm}
On the other hand, we can classify all Ulrich ideals on two-dimensional quotient singularities explicitly.

\medskip
\section{Preliminaries}

Let $(A,\m)$ be a Cohen-Macaulay local ring and suppose 
that $k=A/\m$ is an infinite field.  
For an $A$-module $M$, $\ell(M)$ (resp. $\mu(M)$) denotes  
the length (resp. the number of minimal system of generators) of $M$. 
\par 
Let $I \subset A$ be an $\m$-primary ideal. 
Then there exists a parameter ideal $Q \subset I$ such that 
$I^{n+1}=QI^n$ for some $n \ge 1$. 
Then $Q$ is called a \textit{minimal reduction} of $I$. 
The \textit{multiplicity} of $I$ (denoted by $e_0(I)$) is given 
by $e_0(I)=\ell_A(A/Q)$. 
In particular, $e_0(A)$ is called the \textit{multiplicity} of the ring $A$.   
\par 
In general, it is well-known that $\mu(\m) \le e_0(\m)+\dim A-1$ holds, and $A$ is said to 
have \textit{minimal multiplicity} if equality holds. 

\subsection{Ulrich ideals}
First we recall the definition of Ulrich ideals and good ideals; see \cite{GIW, GOTWY1}.

\begin{defn}[\textbf{Good ideal, Ulrich ideal}]
An $\m$-primary ideal $I \subset A$ is called an \textit{Ulrich ideal} (resp.  a \textit{good ideal})
if $I^2=QI$ holds for some minimal reduction $Q$ of $I$ and $I/I^2$ is a free $A/I$-module (resp. $I=Q\colon I$).
\end{defn}
\par \vspace{1mm}
\par 
We denote the set of all Ulrich ideals in $A$ by ${\mathcal X}_A$. 
One has the following criterion for $I$ to be an Ulrich ideal. 

\begin{lem}[\textrm{cf. \cite[Lemma 2.3]{GOTWY1}}] \label{CriterionUlrich}
Let $I \subset A$ be an $\m$-primary ideal. 
Then the following conditions are equivalent. 
\begin{enumerate}
\item $I$ is an Ulrich ideal. 
\item $I^2=QI$ holds for some minimal reduction $Q$ of $I$ and $e_0(I)=(\mu(I)-\dim A+1) \cdot \ell(A/I)$. 
\end{enumerate} 
\end{lem}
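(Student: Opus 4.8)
The plan is to use that the condition ``$I^{2}=QI$ for some minimal reduction $Q$ of $I$'' occurs, by definition of an Ulrich ideal, in both (1) and (2); so the content of the lemma is that, once this condition is granted, the $A/I$-module $I/I^{2}$ is free if and only if $e_{0}(I)=(\mu(I)-d+1)\,\ell(A/I)$, where $d=\dim A$. First I would dispose of the freeness criterion abstractly: since $I^{2}\subseteq\m I$ we have $\mu_{A/I}(I/I^{2})=\dim_{k}I/\m I=\mu(I)$, so $I/I^{2}$ is a homomorphic image of $(A/I)^{\mu(I)}$; because $A/I$ is Artinian this forces $\ell(I/I^{2})\le\mu(I)\,\ell(A/I)$, with equality exactly when the surjection $(A/I)^{\mu(I)}\twoheadrightarrow I/I^{2}$ is bijective, i.e.\ exactly when $I/I^{2}$ is $A/I$-free. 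Thus everything reduces to computing $\ell(I/I^{2})$ under the hypothesis $I^{2}=QI$.

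The one step with genuine content is the identity $Q/QI\cong(A/I)^{d}$. Here I would use that $A$ is Cohen--Macaulay and $Q$ is a parameter ideal, so a minimal generating set $a_{1},\dots,a_{d}$ of $Q$ is a regular sequence; consequently a relation $\sum_{i}c_{i}a_{i}=0$ forces every $c_{i}\in Q$ (a short induction on $d$, using at each stage that $a_{j}$ is a nonzerodivisor modulo $(a_{1},\dots,a_{j-1})$). Then in the natural surjection $(A/I)^{d}\twoheadrightarrow Q/QI$, $e_{i}\mapsto\overline{a_{i}}$, suppose $\sum_{i}c_{i}a_{i}\in QI$; writing $\sum_{i}c_{i}a_{i}=\sum_{i}b_{i}a_{i}$ with $b_{i}\in I$ gives $\sum_{i}(c_{i}-b_{i})a_{i}=0$, hence $c_{i}-b_{i}\in Q\subseteq I$ and so $c_{i}\in I$; thus the surjection is injective and $\ell(Q/QI)=d\,\ell(A/I)$. (Equivalently: each $a_{i}$ acts as $0$ on $A/I$, so the Koszul resolution of $A/Q$ tensored with $A/I$ has zero differentials, whence $Q/QI\cong\Tor_{1}^{A}(A/Q,A/I)\cong(A/I)^{d}$.) This is the step I expect to require the most care; the rest is bookkeeping.

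Finally I would assemble the length count: from $I^{2}=QI$ we have $I/I^{2}=I/QI$, and the exact sequence $0\to Q/QI\to I/QI\to I/Q\to 0$, together with $\ell(I/Q)=\ell(A/Q)-\ell(A/I)=e_{0}(I)-\ell(A/I)$, yields
\[
\ell(I/I^{2})\;=\;d\,\ell(A/I)+e_{0}(I)-\ell(A/I)\;=\;(d-1)\,\ell(A/I)+e_{0}(I).
\]
By the first paragraph $I/I^{2}$ is $A/I$-free if and only if this equals $\mu(I)\,\ell(A/I)$, i.e.\ if and only if $e_{0}(I)=(\mu(I)-d+1)\,\ell(A/I)$, which is precisely the numerical condition in (2); this gives $(1)\Leftrightarrow(2)$. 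The standing assumption that $k$ is infinite enters only to guarantee that a minimal reduction $Q$ generated by $d$ elements exists, and that $e_{0}(I)=\ell(A/Q)$ is independent of the chosen $Q$.
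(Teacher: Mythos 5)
Your argument is correct and complete. The paper itself gives no proof of this lemma, citing \cite[Lemma 2.3]{GOTWY1} instead, and your proof follows essentially the standard route of that reference: freeness of $I/I^2$ over $A/I$ is detected by the length equality $\ell(I/I^2)=\mu(I)\,\ell(A/I)$, and under $I^2=QI$ one computes $\ell(I/I^2)=(d-1)\ell(A/I)+e_0(I)$ from the exact sequence $0\to Q/QI\to I/QI\to I/Q\to 0$ together with $Q/QI\cong (A/I)^d$ (which your regular-sequence, or Koszul, argument establishes correctly), so the two conditions coincide.
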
	

\par \vspace{1mm}
Any Ulrich ideal is a good ideal (see \cite[Corollary 2.6]{GOTWY1}). 
For instance, if $A$ is a two-dimensional Cohen-Macaulay local ring of minimal multiplicity,  then 
the maximal ideal $\m$ is an Ulrich ideal and thus a good ideal.  
Moreover, any power $\m^n$ $(n \ge 2)$ 
is also a good ideal but not an Ulrich ideal.  

\begin{prob}
Determine the set  ${\mathcal X}_A$. 
\end{prob}

\medskip 
\subsection{Canonical trace ideal} 

Let $(A,\m)$ be a Cohen-Macaulay local ring of $d=\dim A \ge 1$, 
and suppose $A$ is a homomorphic image of a regular local ring $S$
of $v=\dim S$. 
Then $A$ admits a canonical module $\omega_A \cong \Ext_S^{v-d}(A,S)$. 
Note that if $A$ is a generically Gorenstein ring, that is,  
$A_{\frp}$ is Gorenstein for every $\frp \in \Min(A)$, then 
$\omega_A$ is isomorphic to an ideal of $A$; e.g. \cite[Korollar 6.7]{HK}. 

\begin{defn}[\textbf{Trace ideal}; see \cite{HHS}] \label{Def-Trace}
For an $A$-module $M$, the \textit{trace ideal} of $M$ is defined to be 
\[
\tr_A(M)=\sum_{f \in \Hom_A(M,A)} f(M). 
\] 
\par 
Now suppose that $A$ is generically Gorenstein. 
Then $\tr_A(\omega_A)$ is an ideal, which 
is called the \textit{canonical trace ideal} of $A$. 
\par
Moreover, we put $\res(A)=\ell(A/\tr_A(\omega_A))$, the \textit{residue} of $A$. 
\end{defn}

\par 
If $\tr_A(\omega_A) =A$, then $A$ is Gorentein. 
Recall the notion of nearly Gorensteinness. 

\begin{defn}[\textrm{cf. Herzog et. al \cite{HHS}}] \label{NG-Gor}
A Cohen-Macaulay ring 
$A$ is called \textit{nearly Gorenstein} if $\tr_A(\omega_A) \supset \m$. 
\end{defn}

\begin{quest}
When is $A$ nearly Gorenstein? 
\end{quest}

\par 
Now let $S$ be a regular local ring and $\fra \subset S$ an 
prime ideal. Put $A=S/\fra$.   
Suppose that $A$ is a non-Gorenstein, Cohen-Macaulay 
local domain with $e_0(A)=3$ (e.g. a rational triple point). 
Then $A$ has minimal multiplicity and hence $\CMtype(A)=2$.   
Hence Hilbert-Burch's theorem yields the following 
minimal free resolution of $A$ over $S$:  
\[
0 \to S^2 \stackrel{\mathbb{M}}{\longrightarrow} S^3 \to S \to A \to 0, \quad \text{where}\; \fra=I_2(\mathbb{M}).  
\]
\cite[Corollary 3.4]{HHS} implies the following lemma, which plays a 
key role in the next section.  

\begin{lem} \label{Burch}
Suppose that $A$ is a non-Gorenstein, Cohen-Macaulay 
local domain with $e_0(A)=3$. 
Then we have  
$\tr_A(\omega_A)=I_1(\mathbb{M})A$. 
\end{lem}

\par 
For example, if $A=k[[x,y,z,t]]/I_2
{\small \begin{pmatrix}
x & t^{n+1} &t^{n+1}+z\\[1mm]
t^{n+1} & y & z
\end{pmatrix}}$, where $n \ge 0$ is an integer, 
then 
\[
\tr_A(\omega_A)=(x,t^{n+1}, t^{n+1}+z, t^{n+1}, y, z)=(x,y,z,t^{n+1}). 
\]

\medskip
\subsection{Rational singularity}
~\par \vspace{1mm}
Let $(A,\m)$ be a two-dimensional excellent normal local domain 
containing an algebraically closed field $k$ of characteristic $0$. 
Suppose that $A$ is not regular,  
and let $f\:X \to \spec A$ be a resolution of singularities 
with exceptional divisor $E:=f^{-1}(\m)$ unless otherwise specified. 
Let $E=\bigcup_{i=1}^rE_i$ be the decomposition into  irreducible 
components of $E$; see \cite{OWY1}.  
\par 
The weighted dual graph $\Gamma$ of $E=\bigcup_{i=1}^rE_i$ as follows: 
The set of vertices of $\Gamma$ consists of prime divisors $E_i$. 
We connect the vertices $E_i$ and $E_j$ if $E_i$ and $E_j$ intersect. 
Moreover, we assign the weight $-a$ to the vertex corresponding to 
$E_i$ if $E_i^2=-a$.

\par \vspace{2mm}
For any $\m$-primary integrally closed ideal $I$, there exists a 
resolution of singularities $X \to \Spec A$ and an anti-nef cycle 
$Z$ on $X$ such that  the ideal sheaf 
$I\mathcal{O}_X=\mathcal{O}_X(-Z)$ is invertible and 
$I=H^0(X, I\mathcal{O}_X)$.
Then $I$ is said to be \textit{represented on} $X$ by $Z$ and 
it is denoted by $I=I_Z$. 

\par
The invariant  $p_g(A):=\ell(H^1(X,\mathcal{O}_X))$ 
is called the \textit{geometric genus} of $A$. 
This is independent on the choice of the resolution of singularities $X \to \Spec A$. 

\medskip
\begin{defn}[\textbf{Rational singularity}] \label{Defn-Rational}
A ring $A$ is called a \textit{rational singularity} if $p_g(A)=0$. 
\end{defn}

\par \vspace{1mm}
Let $A$ be a rational singularity. 
Then $A$ has minimal multiplicity and $\CMtype(A)=e-1$, where 
$e=e_0(A)$ denotes the multiplicity of $A$. 
Then $A$ is Gorenstein if and only if $e=2$. 
Such a ring $A$ is called a \textit{rational double point} 
(abbr. RDP). 

\par \vspace{1mm} 
It is known that a rational double point $(A,\m)$ is  isomorphic to one of the following hypersurfaces:
\par \vspace{1mm}
$(A_n)$\;
$k[[x,y,z]]/(z^2+x^2+y^{n+1})\;(n\ge 1)$, 

$(D_n)$\;
$k[[x,y,z]]/(z^2+x^2y+y^{n-1})\;(n\ge 4)$, 

$(E_6)$\;
$k[[x,y,z]]/(z^2+x^3+y^4)$, 

$(E_7)$\;
$k[[x,y,z]]/(z^2+x^3+xy^3)$, 

$(E_8)$\;
$k[[x,y,z]]/(z^2+x^3+y^5)$. 

\par \vspace{3mm}
All Ulrich ideals for rational double points are completely 
classified as follows
(see \cite[Theorem 1.4]{GOTWY2}):  
\par \vspace{1mm}
$(A_{2m})$\;
$\{(x, y, z), (x, y^2, z),\ldots , (x, y^m, z)\}$, 
\par \vspace{1mm}
$(A_{2m+1})$\;
$\{(x, y, z), (x, y^2, z),\ldots , (x, y^{m+1}, z)\}$, 
\par \vspace{1mm}
$(D_{2m})$\;
$\{(x, y, z), (x, y^2, z),\ldots , (x, y^{m-1}, z),$ 
\par \qquad \qquad $(x+\sqrt{-1}y^{m-1}, y^m, z), (x-\sqrt{-1}y^{m-1}, y^m, z), (x^2, y, z)\}$, 
 \par \vspace{1mm}
$(D_{2m+1})$\;
$\{(x, y, z), (x, y^2, z),\ldots , (x, y^m, z), (x^2, y, z)\}$, 
\par \vspace{1mm}
$(E_6)$\;
$\{(x, y, z), (x, y^2, z)\}$, 
\par \vspace{1mm}
$(E_7)$\;
$\{(x, y, z), (x, y^2, z), (x, y^3, z)\}$, 
\par \vspace{1mm}
$(E_8)$\;
$\{(x, y, z), (x, y^2, z)\}$. 

\par \vspace{3mm}
In what follows, let $(A,\m)$ be a rational singularity with $e \ge 3$. 
Suppose that $X \to \Spec A$ is the minimal resolution of singularities, that is, any resolution of singularities factors through 
$X \to \Spec A$. 
This means that $E=\cup_{i=1}^n E_i$ does not contain any $(-1)$-curve. 
\par 
A cycle $Z = \sum_{i=1}^n a_iE_i$ is positive if $a_i \ge 0$ for each $i$ 
and $a_j \ne 0$ for some $j$. 
A positive cycle $Z$ is called  \textit{anti-nef} if $ZE_i < 0$ for every $i$. 
Let $Z_0$ be the fundamental cycle on $X$, 
that is, $Z_0$ is 
the minimum element of anti-nef cycles on $X$. 
\par 
Note that the maximal ideal $\m$ is represented on the minimal 
resolution of singularities by the fundamental cycle $Z_0$. 
Let  $K_X$ denote the canonical divisor of $X$. 
Then for any positive cycle $Y$ on $X$, we put
\[
p_a(Y)=\dfrac{Y^2+K_XY}{2}+1. 
\]
Then $p_a(Y)$ is called the \textit{arithmetic genus} of $Y$. 
Since $E_i \cong \mathbb{P}^1$, we have $p_a(E_i)=0$. 
Thus $K_XE_i=-E_i^2-2$ for every $i$. 
See e.g. \cite{Ish} for more details. 

\par 
In order to prove the above theorem, we recall the structure theorem 
of Ulrich ideals; see \cite[Theorem 6.4]{GOTWY2}. 
\par 
Let $I \subset A$ be an Ulrich ideal which is not a maximal ideal $\m$.  
Since any Ulrich ideal is an integrally closed ideal represented on 
the minimal resolution, $I=I_Z$ for some anti-nef cycle $Z$ on $X$. 
Then \cite[Theorem 6.4]{GOTWY2} shows that there exists an integer $s \ge 1$  and anti-nef cycles $Z_1,\ldots,Z_s$ and positive cycles 
$0 < Y_s \le \cdots \le Y_1 (\le Z_0)$ such that 
\begin{eqnarray*}
Z  =  
Z_s &=& Z_{s-1}+Y_s \\ 
&\vdots & \\
Z_2 & = & Z_1+Y_2 \\
Z_1 & = & Z_0+Y_1  
\end{eqnarray*}
and 
\[
Y_kZ_{k-1}=p_a(Y_k)= K_X(Z_0-Y_k)=0 \quad \text{for each $k=1,2,\ldots,s$.}
\]
In particular, 
\begin{equation} \label{Eq:UlrichSupp}
\{E_i \,|\, -E_i^2=b_i \ge 3\} \subset \Supp(Y_1)
\subset \{E_i \,|\, E_iZ_0 =0\}. 
\end{equation}

\par \vspace{1mm}
So we have the following criterion for having the unique Ulrich ideal $\m$, 
which plays a key role in Section 4. 

\begin{lem} \label{KeyLemma}
If there exists a curve $E_j$ so that $-E_j^2=b_j \ge 3$ and 
$-E_jZ_0 > 0$, then ${\mathcal X}_A=\{\m\}$. 
\end{lem}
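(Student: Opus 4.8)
The plan is to argue by contradiction: suppose $\cX_A$ contains some Ulrich ideal $I$ other than $\m$, and show this is incompatible with the existence of the curve $E_j$ from the hypothesis. Since $A$ is a rational singularity with $e \ge 3$, every Ulrich ideal is integrally closed and represented on the minimal resolution, so $I = I_Z$ for an anti-nef cycle $Z$. Applying the structure theorem of \cite[Theorem 6.4]{GOTWY2} quoted above, there is a chain $Z_0 < Z_1 < \cdots < Z_s = Z$ built up by positive cycles $0 < Y_s \le \cdots \le Y_1 \le Z_0$, and the containments in \eqref{Eq:UlrichSupp} apply. In particular $\Supp(Y_1) \subset \{E_i \mid E_i Z_0 = 0\}$.

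First I would extract the exact consequence of the hypothesis: the curve $E_j$ has $-E_j Z_0 > 0$, i.e. $E_j Z_0 \ne 0$, so $E_j \notin \{E_i \mid E_i Z_0 = 0\}$ and hence $E_j \notin \Supp(Y_1)$ by the right-hand containment of \eqref{Eq:UlrichSupp}. On the other hand, the left-hand containment of \eqref{Eq:UlrichSupp} says $\{E_i \mid -E_i^2 = b_i \ge 3\} \subset \Supp(Y_1)$, and $E_j$ satisfies $-E_j^2 = b_j \ge 3$, so $E_j \in \Supp(Y_1)$. This is the contradiction, and it forces $\cX_A = \{\m\}$ (the maximal ideal is always an Ulrich ideal here since a rational singularity has minimal multiplicity, so $\cX_A$ is nonempty and equals exactly $\{\m\}$).

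There is essentially no main obstacle: the lemma is a direct packaging of \eqref{Eq:UlrichSupp}, which is itself recorded in the excerpt as a consequence of \cite[Theorem 6.4]{GOTWY2}. The only point requiring a word of care is that \eqref{Eq:UlrichSupp} is stated for an Ulrich ideal $I \ne \m$ with its associated $Y_1$; one should note that the two displayed containments hold simultaneously for the \emph{same} cycle $Y_1$ attached to \emph{any} such $I$, so the squeeze on $E_j$ applies no matter which non-maximal Ulrich ideal one started with. I would therefore write the proof in three short sentences along the lines above, invoking \eqref{Eq:UlrichSupp} for both inclusions and concluding that no Ulrich ideal other than $\m$ can exist.
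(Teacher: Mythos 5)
Your proof is correct and is essentially identical to the paper's argument: both take a non-maximal Ulrich ideal $I=I_Z$, invoke the structure theorem of \cite[Theorem 6.4]{GOTWY2} to obtain the cycle $Y_1$ satisfying Eq.~(\ref{Eq:UlrichSupp}), and derive the contradiction that $E_j$ both lies in and does not lie in $\Supp(Y_1)$. Your added remark that $\m$ itself is Ulrich (via minimal multiplicity), so that ${\mathcal X}_A$ is exactly $\{\m\}$, is a point the paper leaves implicit but is consistent with it.
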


\begin{proof}
If there exist an Ulrich ideal $I \ne \m$, then one can take 
a positive cycle $Y_1 \le Z_0$ which satisfies 
Eq.(\ref{Eq:UlrichSupp}). 
But this contradicts the assumption. 
\end{proof}

\bigskip 
We next show the relationship between the canonical trace ideal and 
Ulrich ideals; see \cite[Proposition 2.12, Corollary 2.14]{GK}.

\begin{prop} \label{Ulrich-Can}
Suppose that $A$ is a rational singularity which is not Gorenstein.    
Then $I \supset \tr_A(\omega_A)$ holds for any Ulrich ideal $I \subset A$.  
\end{prop}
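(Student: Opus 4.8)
The statement is \cite[Proposition~2.12, Corollary~2.14]{GK}; since it drives everything in Section~3, I would include a proof, and the plan is as follows.

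\emph{Step 1: reduction to the Artinian case.} Let $I$ be an Ulrich ideal and $Q=(a_1,a_2)$ a minimal reduction with $I^2=QI$; as $A$ is a Cohen--Macaulay rational singularity, $a_1,a_2$ is a regular sequence. Set $\bar A=A/Q$ and $\bar I=IA/Q$. By the theory of Ulrich ideals \cite{GOTWY1}, $\bar I$ is an ideal of the Artinian local ring $\bar A$ with $\bar I^{\,2}=0$ and $\bar I\cong(\bar A/\bar I)^{\oplus n}$ as $\bar A/\bar I$-modules, where $n=\mu(I)-2\ge 1$ because an Ulrich ideal is not a parameter ideal. Also $\bar A$ is again non-Gorenstein, $\omega_{\bar A}\cong\omega_A/Q\omega_A$, and --- the delicate point --- $\tr_{\bar A}(\omega_{\bar A})=\bigl(\tr_A(\omega_A)+Q\bigr)/Q$. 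This last identity reduces to the base-change isomorphism $\Hom_A(\omega_A,A)\otimes_A\bar A\xrightarrow{\ \sim\ }\Hom_{\bar A}(\omega_{\bar A},\bar A)$, which holds because $a_1,a_2$ is regular on $A$, on $\omega_A$ and on $\Hom_A(\omega_A,A)$; the third of these is where I would invoke that $A$ is normal of dimension two, so that $\Hom_A(\omega_A,A)=\omega_A^{-1}$ is reflexive and hence maximal Cohen--Macaulay. Thus it suffices to show: if $B$ is a non-Gorenstein Artinian local ring with canonical module $\omega$, and $J\subset B$ is an ideal with $J^2=0$ which is free of positive rank over $B/J$, then $\tr_B(\omega)\subseteq J$.

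\emph{Step 2: the Artinian statement via Matlis duality.} Write $(-)^{\vee}=\Hom_B(-,\omega)$; then $\omega=B^{\vee}$ and $\Hom_B(\omega,X)\cong(\omega\otimes_B X^{\vee})^{\vee}$ for every finite $B$-module $X$. Applying $\Hom_B(\omega,-)$ to $0\to J\to B\to B/J\to 0$, the inclusion $\tr_B(\omega)\subseteq J$ is equivalent to the vanishing of $\Hom_B(\omega,B)\to\Hom_B(\omega,B/J)$, which after Matlis duality is equivalent to the vanishing of the natural map $\omega\otimes_B(0:_{\omega}J)\to\omega\otimes_B\omega$ induced by $(0:_{\omega}J)=(B/J)^{\vee}\hookrightarrow B^{\vee}=\omega$. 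Since $J$ is a faithful $B/J$-module with $J^2=0$, one gets $(0:_B J)=J$ and $(0:_{\omega}J)=J\omega$, and the image of the map above equals $J\cdot(\omega\otimes_B\omega)$; as $\ann_B(\omega\otimes_B\omega)=(0:_B\tr_B(\omega))$, everything comes down to proving $J\cdot(\omega\otimes_B\omega)=0$. For this I would use the identifications $\omega\otimes_B\omega\otimes_B B/J\cong(\omega/J\omega)\otimes_{B/J}(\omega/J\omega)$ and $\omega/J\omega\cong(\omega_{B/J})^{\oplus n}$ (the latter by applying $\Hom_B(-,\omega)$ to the same short exact sequence and using that $\omega$ is injective over the Artinian ring $B$), and then compare lengths.

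The step I expect to be the real obstacle is precisely the final length bookkeeping forcing $J\cdot(\omega\otimes_B\omega)=0$: this is the only place where both hypotheses ``$J^2=0$'' and ``$J$ is $B/J$-free'' are genuinely consumed --- so that a mere good ideal, which is the weaker consequence of being Ulrich, would not be enough --- and it is the technical heart of \cite{GK}. The descent of the canonical trace ideal in Step~1, which rests on the normality of $A$, is the other point requiring care.
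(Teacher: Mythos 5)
Your proposal does not actually close the key step, and, as you have set it up, it cannot: the Artinian statement you reduce to is \emph{false} when $n=1$. Take $C=k[x,y]/(x,y)^2$ and let $B=C\ltimes C$ be the idealization, $J=0\ltimes C$. Then $B$ is Artinian local of type $2$, hence non-Gorenstein, $J^2=0$, and $J\cong B/J=C$ is free of rank one over $B/J$, so all hypotheses of your Step~2 hold with $n=1$. Nevertheless $\tr_B(\omega_B)\not\subseteq J$: writing $\omega_B=\Hom_C(B,\omega_C)\cong\omega_C\oplus\omega_C$ with $B$-action $(c,m)\cdot(u,v)=(cu+mv,\,cv)$, one checks that for every $C$-linear map $g\colon\omega_C\to C$ the assignment $(u,v)\mapsto(g(v),g(u))$ is $B$-linear; hence the image of $\tr_B(\omega_B)$ under $B\to B/J=C$ contains $\tr_C(\omega_C)=(x,y)\neq 0$ (for instance $(x,0)\notin J$ lies in the trace). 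So no ``length bookkeeping'' can force $J\cdot(\omega\otimes_B\omega)=0$ at the stated level of generality; and since you never carry that computation out (you yourself flag it as the expected obstacle), the heart of the argument is missing in any case.

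What your plan omits is exactly the numerical input with which the paper's own two-line proof begins: since a non-Gorenstein two-dimensional rational singularity has $\CMtype(A)=e_0(A)-1\ge 2$, \cite[Corollary 2.12]{GTT} gives $\mu(I)=\CMtype(A)+2\ge 4=\dim A+2$ for every Ulrich ideal $I$, i.e.\ $n=\mu(I)-2\ge 2$; only with this does \cite[Proposition 2.12]{GK} apply and yield $\tr_A(\omega_A)\subseteq I$. Your reduction uses nothing about $A$ beyond non-Gorensteinness, so it cannot distinguish the true case $n\ge 2$ from the false case $n=1$. Two smaller remarks: in Step~1 you only need the inclusion $(\tr_A(\omega_A)+Q)/Q\subseteq\tr_{\bar A}(\omega_{\bar A})$, which is automatic because every $f\in\Hom_A(\omega_A,A)$ reduces modulo $Q$ (using $\omega_{\bar A}\cong\omega_A/Q\omega_A$); the full base-change isomorphism you invoke is both unnecessary and not justified by regularity of $a_1,a_2$ on $\Hom_A(\omega_A,A)$ alone, since it requires control of $\Ext^1_A(\omega_A,A)$ under the specialization. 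And even granting $n\ge 2$, you would still have to perform the final length comparison, or simply quote \cite{GTT} and \cite{GK} as the paper does.
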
 

\begin{proof}
Suppose that $I$ is an Ulrich ideal of $A$. 
Since $A$ is a rational singularity with $\CMtype(A) \ge 2$, 
we have $\mu(I) =\CMtype(A)+2 \ge 4$ by 
\cite[Corollary 2.12]{GTT} and $A$ is a generically Gorenstein ring. 
Thus \cite[Proposition 2.12]{GK} implies $I \supset \tr_A(\omega_A)$. 
\end{proof}
 
\par 
So it is natural to ask the following question. 
\begin{quest}
When is $\tr_A(\omega_A)$ an Ulrich ideal?  
\end{quest}

\medskip 
\section{Rational Triple Points}

\par \vspace{3mm}
A rational singularity with $e_0(A)=3$ is called a \textit{rational triple point}
(abbr. RTP). 
Artin \cite{Ar} and Tyurina \cite{Tyu} classified all rational triple points: 
For a dual graph $H$ (of the minimal resolution), we denote the corresponding coordinate ring by $R(H)$:

\begin{enumerate}

\item $A_{\ell,m,n}(0\le \ell \le m\le n)$

\begin{picture}(200,60)(-40,0)
    \thicklines
\put(25,18){{\tiny $1$}}
\put(30,12){\circle{6}}
\put(33,12){\line(1,0){4}}
\put(28,7){$\underbrace{\phantom{AAAA}}_{\text {m}}$}
\put(38,9){$\cdots$}
\put(52,12){\line(1,0){4}}
  \put(55,18){{\tiny $1$}}
\put(61,12){\circle{6}}
\put(65,12){\line(1,0){20}}
  \put(83,18){{\tiny $1$}}
\put(91,12){\circle*{8}}
\put(90,16){\line(0,1){15}}
\put(95,12){\line(1,0){20}}
  \put(115,18){{\tiny $1$}}
\put(120,12){\circle{6}}
  \put(145,18){{\tiny $1$}}
\put(150,12){\circle{6}}
\put(128,9){$\cdots$}
\put(123,12){\line(1,0){4}}
\put(142,12){\line(1,0){4}}
\put(118,7){$\underbrace{\phantom{AAAA}}_{\text {n}}$}
\put(83,38){{\tiny $1$}}
\put(90,34){\circle{6}}
  \put(120,40){{\tiny $1$}}
\put(125,34){\circle{6}}
\put(100,31){$\cdots$}
\put(94,34){\line(1,0){4}}
\put(117,34){\line(1,0){4}}
\put(83,40){$\overbrace{\phantom{AAAAA}}^{\text {$\ell$}}$}
\end{picture}
\vspace{5mm}

\item $B_{m,n}(m\ge0, n\ge 3)$

\begin{picture}(400,35)(-20,0)
    \thicklines

\put(23,18){{\tiny $1$}}
\put(28,12){\circle{6}}
\put(33,12){\line(1,0){4}}  
\put(37,9){$\cdots$}
\put(26,7){$\underbrace{\phantom{AAAA}}_{\text {m}}$} 
  
\put(51,12){\line(1,0){4}}
\put(55,18){{\tiny $1$}}
\put(60,12){\circle{6}}
\put(65,12){\line(1,0){20}}
\put(80,18){{\tiny $1$}}
\put(90,12){\circle*{8}}
\put(95,12){\line(1,0){20}}
\put(114,18){{\tiny $2$}}
\put(120,12){\circle{6}}
\put(125,12){\line(1,0){20}}
\put(145,18){{\tiny $2$}}
\put(150,12){\circle{6}}
\put(154,12){\line(1,0){4}}

\put(158.5,9){$\cdots$}
\put(146,7){$\underbrace{\phantom{AAAAAAAA}}_{\text {n-2}}$}

\put(175,12){\line(1,0){4}}
\put(177,18){{\tiny $2$}}
\put(183,12){\circle{6}}
\put(187,12){\line(1,0){20}}
\put(207,18){{\tiny $1$}}
\put(212,12){\circle{6}}

\put(120,16){\line(0,1){15}}
\put(111,38){{\tiny $1$}}
\put(120,34){\circle{6}}
\end{picture}

\vspace{5mm}

\item $C_{m,n}(m\ge 0, n\ge 4)$

\begin{picture}(400,35)(-20,0)
    \thicklines
    
  \put(25,18){{\tiny $1$}}
\put(28,12){\circle{6}}
\put(31,12){\line(1,0){4}}

\put(26,7){$\underbrace{\phantom{AAAA}}_{\text {m}}$}
\put(37,9){$\cdots$}
 
 \put(51,12){\line(1,0){4}}
  \put(55,18){{\tiny $1$}}
\put(60,12){\circle{6}}
\put(65,12){\line(1,0){20}}
  \put(80,18){{\tiny $1$}}
\put(90,12){\circle*{8}}
\put(95,12){\line(1,0){20}}
  \put(115,18){{\tiny $2$}}
\put(120,12){\circle{6}}
\put(125,12){\line(1,0){20}}
  \put(145,18){{\tiny $2$}}
\put(150,12){\circle{6}}
\put(154,12){\line(1,0){4}}

\put(158,9){$\cdots$}
\put(149,7){$\underbrace{\phantom{AAAA}}_{\text {n-3}}$}
 
  \put(172,12){\line(1,0){4}}
  \put(174,18){{\tiny $2$}}
\put(180,12){\circle{6}}
\put(185,12){\line(1,0){20}}
  \put(205,18){{\tiny $1$}}
\put(210,12){\circle{6}}
\put(180,16){\line(0,1){15}}
  \put(173,38){{\tiny $1$}}
\put(180,34){\circle{6}}
\end{picture}

\vspace{5mm}
   
\item $D_{n}(n\ge 0)$

\begin{picture}(400,35)(-20,0)
    \thicklines
    
  \put(25,18){{\tiny $1$}}
\put(28,12){\circle{6}}
\put(31,12){\line(1,0){4}}

\put(26,7){$\underbrace{\phantom{AAAA}}_{\text {n}}$}
\put(37,9){$\cdots$}

\put(37,9){$\cdots$}
 
 \put(51,12){\line(1,0){4}}
  \put(55,18){{\tiny $1$}}
\put(60,12){\circle{6}}
\put(65,12){\line(1,0){20}}
  \put(80,18){{\tiny $1$}}
\put(90,12){\circle*{8}}
\put(95,12){\line(1,0){20}}
  \put(115,18){{\tiny $2$}}
\put(120,12){\circle{6}}
\put(125,12){\line(1,0){20}}
  \put(145,18){{\tiny $3$}}
\put(150,12){\circle{6}}
\put(154,12){\line(1,0){20}}
  \put(174,18){{\tiny $2$}}
\put(180,12){\circle{6}}
\put(185,12){\line(1,0){20}}
  \put(205,18){{\tiny $1$}}
\put(210,12){\circle{6}}

\put(150,16){\line(0,1){15}}
  \put(143,38){{\tiny $2$}}
\put(150,34){\circle{6}}
\end{picture}

\vspace{5mm}


\item $F_{n}(n\ge 0)$

\begin{picture}(400,35)(-20,0)
    \thicklines
    
  \put(25,18){{\tiny $1$}}
\put(28,12){\circle{6}}
\put(31,12){\line(1,0){4}}

\put(26,7){$\underbrace{\phantom{AAAA}}_{\text {n}}$}
\put(37,9){$\cdots$}

 \put(51,12){\line(1,0){4}}
  \put(55,18){{\tiny $1$}}
\put(60,12){\circle{6}}
\put(65,12){\line(1,0){20}}
  \put(80,18){{\tiny $1$}}
\put(90,12){\circle*{8}}
\put(95,12){\line(1,0){20}}
  \put(115,18){{\tiny $2$}}
\put(120,12){\circle{6}}
\put(125,12){\line(1,0){20}}
  \put(145,18){{\tiny $3$}}
\put(150,12){\circle{6}}
\put(154,12){\line(1,0){20}}
  \put(174,18){{\tiny $4$}}
\put(180,12){\circle{6}}
\put(185,12){\line(1,0){20}}
  \put(205,18){{\tiny $3$}}
\put(210,12){\circle{6}}
\put(213,12){\line(1,0){20}}
  \put(232,18){{\tiny $2$}}
\put(236,12){\circle{6}}
\put(180,16){\line(0,1){15}}
  \put(174,38){{\tiny $2$}}
\put(180,34){\circle{6}}
\end{picture}

\vspace{5mm}


\item $H_{n}(n\ge 5)$

\begin{picture}(400,35)(-20,0)
    \thicklines
    
  \put(25,18){{\tiny $1$}}
\put(28,12){\circle{6}}
\put(33,12){\line(1,0){20}}
  \put(55,18){{\tiny $2$}}
\put(60,12){\circle{6}}
\put(68,9){$\cdots$}
  \put(80,18){{\tiny $3$}}
\put(90,12){\circle{6}}

\put(64,12){\line(1,0){4}}

\put(96,12){\line(1,0){20}}
 \put(52,7){$\underbrace{\phantom{AAAAA}}_{\text {n-5}}$}

 
\put(82,12){\line(1,0){4}}

  \put(115,18){{\tiny $3$}}
\put(120,12){\circle{6}}
\put(125,12){\line(1,0){20}}
  \put(145,18){{\tiny $3$}}
\put(150,12){\circle{6}}
\put(154,12){\line(1,0){20}}
  \put(174,18){{\tiny $2$}}
\put(180,12){\circle{6}}
\put(185,12){\line(1,0){20}}
  \put(205,18){{\tiny $1$}}
\put(210,12){\circle{6}}

\put(150,16){\line(0,1){15}}
  \put(143,38){{\tiny $2$}}
\put(150,34){\circle*{8}}
\end{picture}

\vspace{3mm}

\item $\Gamma_{1}$

\begin{picture}(400,35)(-20,0)
    \thicklines
    
  \put(25,18){{\tiny $1$}}
\put(30,12){\circle*{8}}
\put(35,12){\line(1,0){20}}
  \put(55,18){{\tiny $3$}}
\put(60,12){\circle{6}}
\put(65,12){\line(1,0){20}}
  \put(80,18){{\tiny $4$}}
\put(90,12){\circle{6}}
\put(95,12){\line(1,0){20}}
  \put(115,18){{\tiny $3$}}
\put(120,12){\circle{6}}
\put(125,12){\line(1,0){20}}
  \put(145,18){{\tiny $2$}}
\put(150,12){\circle{6}}
\put(155,12){\line(1,0){20}}
  \put(175,18){{\tiny $1$}}
\put(180,12){\circle{6}}

\put(90,16){\line(0,1){15}}
  \put(83,38){{\tiny $2$}}
\put(90,34){\circle{6}}
\end{picture}

\vspace{5mm}

\item $\Gamma_{2}$

\begin{picture}(400,35)(-20,0)
    \thicklines

  \put(25,18){{\tiny $1$}}
\put(30,12){\circle*{8}}
\put(35,12){\line(1,0){20}}
  \put(55,18){{\tiny $3$}}
\put(60,12){\circle{6}}
\put(65,12){\line(1,0){20}}
  \put(80,18){{\tiny $5$}}
\put(90,12){\circle{6}}
\put(95,12){\line(1,0){20}}
  \put(115,18){{\tiny $4$}}
\put(120,12){\circle{6}}
\put(125,12){\line(1,0){20}}
  \put(145,18){{\tiny $3$}}
\put(150,12){\circle{6}}
\put(155,12){\line(1,0){20}}
  \put(175,18){{\tiny $2$}}
\put(180,12){\circle{6}}
\put(185,12){\line(1,0){20}}
  \put(205,18){{\tiny $1$}}
\put(210,12){\circle{6}}
\put(90,16){\line(0,1){15}}
  \put(83,38){{\tiny $3$}}
\put(90,34){\circle{6}}
\end{picture}

\vspace{5mm}

\item $\Gamma_{3}$

\begin{picture}(400,35)(-20,0)
    \thicklines

  \put(25,18){{\tiny $1$}}
\put(30,12){\circle*{8}}
\put(35,12){\line(1,0){20}}
  \put(55,18){{\tiny $3$}}
\put(60,12){\circle{6}}
\put(65,12){\line(1,0){20}}
  \put(80,18){{\tiny $4$}}
\put(90,12){\circle{6}}
\put(95,12){\line(1,0){20}}
  \put(115,18){{\tiny $5$}}
\put(120,12){\circle{6}}
\put(125,12){\line(1,0){20}}
  \put(145,18){{\tiny $6$}}
\put(150,12){\circle{6}}
\put(155,12){\line(1,0){20}}
  \put(175,18){{\tiny $4$}}
\put(180,12){\circle{6}}
\put(185,12){\line(1,0){20}}
  \put(205,18){{\tiny $2$}}
\put(210,12){\circle{6}}
\put(150,16){\line(0,1){15}}
  \put(144,38){{\tiny $3$}}
\put(150,34){\circle{6}}
\end{picture}
\end{enumerate}

\vskip 3mm
In what follows, using the classification of rational triple points as above,  
we determine the set of Ulrich ideals ${\mathcal X}_A$. 
Indeed, Tyurina \cite{Tyu} gave the defining ideal $I_2(\mathbb{M})$ 
for any rational triple point $A$.  
Since any rational triple point has $\CMtype(A)=2$, we can apply 
Lemma 2.7 and determine $\tr_A(\omega_A)$ explicitly. 
Then we can prove that $\tr_A(\omega_A)$ is an Ulrich ideal.  
Indeed, it is the minimum Ulrich ideal.  
Thus in order to determine ${\mathcal X}_A$, it suffices to show 
that any ideal containing $\tr_A(\omega_A)$ is an Ulrich ideal. 
 
\par 
First we consider the case $H=A_{\ell ,m,n}$. 

\begin{prop} \label{Aprop}
Suppose $H=A_{\ell ,m,n}$ $(0\le \ell \le m \le n)$ and 
let $A$ be the $(x,y,z,t)$-adic completion of $R(H)$, where 
\begin{equation*}
\begin{aligned}
R(H)&= k[t,x,y,z]/(xy-t^{\ell+m+2},xz-t^{n+2}-zt^{\ell+1},yz+yt^{n+1}-zt^{m+1})\\ 
&= k[t,x,y,z]/I_2
\begin{pmatrix}
x & t^{m+1}& t^{n+1}+z \\[2mm]
t^{\ell+1} & y & z \\
\end{pmatrix}_.
\end{aligned}
\end{equation*}
Then 
\begin{enumerate}
\item$\tr_{A}(\omega_{A})=(x,y,z,t^{\ell +1})$. 
\item${\mathcal X}_{A}=\{(x,y,z,t^i)|i=1,2,\ldots ,\ell +1\}$. 
\item $A$ is nearly Gorenstein if and only if $\ell=0$, that is, 
$A$ is a cyclic quotient singularity. 
\end{enumerate}
\end{prop}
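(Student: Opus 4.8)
The plan is to read off (1) from Lemma~\ref{Burch}, to cut the Ulrich ideals down to an explicit finite list via Proposition~\ref{Ulrich-Can}, to check that every ideal on that list is indeed Ulrich (the only real work), and then to deduce (3) from the explicit shape of $\tr_A(\omega_A)$. For (1): a rational triple point is a non-Gorenstein Cohen--Macaulay local domain with $e_0(A)=3$, so Lemma~\ref{Burch} gives $\tr_A(\omega_A)=I_1(\mathbb{M})A=(x,\,t^{m+1},\,t^{n+1}+z,\,t^{\ell+1},\,y,\,z)A$. Since $z$ lies in this ideal so does $t^{n+1}$, and since $\ell\le m\le n$ both $t^{m+1}$ and $t^{n+1}$ lie in $(t^{\ell+1})$; hence $\tr_A(\omega_A)=(x,y,z,t^{\ell+1})$.

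For (2) I would first pin down the candidates. By Proposition~\ref{Ulrich-Can} every Ulrich ideal of $A$ contains $\tr_A(\omega_A)=(x,y,z,t^{\ell+1})$. Setting $x=y=z=0$ in the defining equations shows $A/(x,y,z)\cong k[[t]]/(t^N)$ for some $N\ge\ell+2$ (the smallest power of $t$ in the relations modulo $(x,y,z)$ is $t^{\ell+m+2}$, and $m\ge\ell$), hence $A/(x,y,z,t^{\ell+1})\cong k[[t]]/(t^{\ell+1})$, a chain ring whose ideals are $(t^i)/(t^{\ell+1})$ for $0\le i\le\ell+1$. So the only proper ideals of $A$ lying above $\tr_A(\omega_A)$ are $I_i:=(x,y,z,t^i)$, $1\le i\le\ell+1$, and $\mathcal X_A\subseteq\{I_1,\dots,I_{\ell+1}\}$. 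Once each $I_i$ is shown to be Ulrich, (2) follows, with $\res(A)=\ell(A/\tr_A(\omega_A))=\ell+1=\sharp\mathcal X_A$.

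The crux is therefore the reverse inclusion: $I_i\in\mathcal X_A$ for $1\le i\le\ell+1$. For $i=1$, $I_1=\m$ is Ulrich because a rational singularity has minimal multiplicity. For $i\ge 2$ I would apply Lemma~\ref{CriterionUlrich}. The easy inputs are $\ell(A/I_i)=\ell(k[[t]]/(t^i))=i$ and $\mu(I_i)=4$: $x,y,z$ are three of the four minimal generators of $\m$, hence independent modulo $\m^2\supseteq\m I_i$, while $t^i$ maps to a generator of $(t^i)\not\subseteq(t^{i+1})$ in $A/(x,y,z)\cong k[[t]]/(t^N)$ (as $i\le\ell+1<N$) whereas $\m I_i+(x,y,z)$ maps into $(t^{i+1})$, so the class of $t^i$ in $I_i/\m I_i$ lies outside the span of those of $x,y,z$. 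It then remains to produce a minimal reduction $Q$ of $I_i$ with $I_i^2=QI_i$ and $e_0(I_i)=\ell(A/Q)=3i$; granting this, $e_0(I_i)=(\mu(I_i)-\dim A+1)\ell(A/I_i)$ and Lemma~\ref{CriterionUlrich} gives the claim. Concretely I would take $Q=(f,g)$ with $f,g$ generic $k$-linear combinations of $x,y,z,t^i$ and verify $I_i^2=QI_i$ by reducing each product of two generators into $QI_i$ using the three defining relations $xy=t^{\ell+m+2}$, $xz=t^{\ell+1}(t^{n+1}+z)$, $yz=t^{m+1}z-t^{n+1}y$ together with $i\le\ell+1\le m+1\le n+1$; the equality $\ell(A/Q)=3i$ then drops out of the length identity $\ell(A/I_i^2)=\ell(A/QI_i)=\ell(A/Q)+2\ell(A/I_i)$ after a direct count of $\ell(A/I_i^2)$. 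This reduction-and-multiplicity verification is the main obstacle. An alternative that avoids exhibiting $Q$ explicitly is to observe that $I_i$ is integrally closed, so a minimal reduction automatically satisfies $I_i^2=QI_i$ (integrally closed $\m$-primary ideals in a two-dimensional rational singularity have reduction number one, by a theorem of Lipman), then to identify the anti-nef cycle $Z^{(i)}$ on the minimal resolution with $I_i=I_{Z^{(i)}}$ and check it satisfies the chain conditions of \cite[Theorem 6.4]{GOTWY2}: namely $Z^{(1)}=Z_0$ and $Z^{(i)}=Z_0+Y_1+\cdots+Y_{i-1}$ with $Y_k\le Z_0$ supported on the branch of length $\ell$ (together with the central $(-3)$-curve), $Y_k^2=-3$, $p_a(Y_k)=0$, and $Y_kZ_{k-1}=0$; from this $e_0(I_i)=-(Z^{(i)})^2=3i$ follows at once.

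Finally, (3): $A$ is nearly Gorenstein iff $\tr_A(\omega_A)\supseteq\m$, i.e.\ iff $t\in(x,y,z,t^{\ell+1})$, equivalently (working modulo $(x,y,z)$, where $A$ becomes $k[[t]]/(t^N)$ with $N\ge\ell+2$) iff $t\in(t^{\ell+1})$, which holds precisely when $\ell=0$. And $\ell=0$ is exactly when the dual graph $A_{0,m,n}$ degenerates to a chain, i.e.\ when $A$ is a two-dimensional cyclic quotient singularity, by the Hirzebruch--Jung description.
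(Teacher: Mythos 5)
Your outline is, in structure, the paper's own proof: part (1) via Lemma~\ref{Burch}, the inclusion $\mathcal{X}_A\subseteq\{(x,y,z,t^i)\mid 1\le i\le \ell+1\}$ via Proposition~\ref{Ulrich-Can} together with the observation that $A/\tr_A(\omega_A)\cong k[[t]]/(t^{\ell+1})$ is a chain ring, and the reverse inclusion via Lemma~\ref{CriterionUlrich}. The pieces you actually carry out (the simplification of $I_1(\mathbb{M})A$, the identification of the candidate ideals, $\mu(J_i)=4$, $\ell(A/J_i)=i$, and part (3)) are correct.

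The genuine shortfall is that the decisive step --- showing each $J_i=(x,y,z,t^i)$ is Ulrich --- is only announced, not performed: you say you ``would'' take generic $f,g$ and verify $J_i^2=(f,g)J_i$ and $\ell(A/(f,g))=3i$, and you yourself flag this as the main obstacle. This is exactly where the paper does the work: it takes the concrete reduction $Q_i=(t^i,\,x+y+z)$ and checks $x^2,xy,y^2\in Q_iJ_i$ from the three relations (which suffices, since $z\equiv-(x+y)$ modulo $Q_i$, so the remaining products of $x,y,z$ reduce to these), and then computes $\ell(A/Q_i)$ directly: eliminating $x$ and reducing the relations modulo $t^i$ (using $i\le\ell+1\le m+1\le n+1$) gives $A/Q_i\cong k[[t,y,z]]/(t^i,y^2,yz,z^2)$, of length $3i$, whence $e_0(J_i)=3i=(\mu(J_i)-1)\ell(A/J_i)$. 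Your suggested route to $\ell(A/Q)$ through $\ell(A/J_i^2)=\ell(A/Q)+2\ell(A/J_i)$ only trades one unperformed count for another, and your fallback via integral closedness of $J_i$, Lipman's reduction-number-one theorem, and the cycle conditions of \cite[Theorem 6.4]{GOTWY2} silently assumes two facts that would themselves need proof: that $J_i$ is integrally closed and represented by the asserted anti-nef cycle $Z^{(i)}$, and that the chain conditions are sufficient as well as necessary (the paper recalls only the necessary direction). So the approach is the right one, but the crux verification is missing; with the explicit choice $Q_i=(t^i,x+y+z)$ it goes through exactly as in the paper.
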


\begin{proof} 
(1) By Lemma \ref{Burch}, we have  
\[
\tr_A(\omega_{A})=(x,y,z,t^{\ell+1},t^{m+1},t^{n+1}+z)=(x,y,z,t^{\ell+1}). 
\]
Moreover, (3) follows from here. 
\par 
(2) (1) and Proposition \ref{Ulrich-Can} imply 
${\mathcal X}_A \subset \{(x,y,z,t^{i}) \,|\, (i=1,2,\ldots,\ell +1) \}$. 
Let us show the converse. 
Fix $i$ with $1 \le i \le \ell+1$ and  
put $J_i=(x,y,z,t^{i})$, $Q_i=(t^i,x+y+z)$. 
Then it is enough to show that $J_i$ is an Ulrich ideal. 
\begin{eqnarray*}
x^2\!\! &= &\!\! (x+y+z)x-xy-xz=(x+y+z)x-t^{l+1}(t^{m+1}+t^{n+1}+z) \in Q_iJ_i. \\
xy\!\! & = &\!\!  t^{\ell+1}t^{m+1} \in Q_iJ_i. \\
y^2\!\! & = &\!\! y(x+y+z)-xy-yz=y(x+y+z)-t^{\ell+1}(t^{m+1}+t^{m-\ell}z-t^{n-\ell}y) \in Q_iJ_i.
\end{eqnarray*}
Hence $J_i^2=Q_iJ_i$. 
Furthermore, we have
\[
e_0(J_i)= \ell(A/Q_i)\ell _A(k[[t,y,z]]/(t^i, y^2, yz, z^2))=3i
=(\mu(J_i)-1)\ell(A/J_i). 
\]
Thus $J_i$ is an Ulrich ideal by virtue of Lemma \ref{CriterionUlrich}. 
\end{proof}

\begin{prop} \label{Bprop}
Suppose $H=B_{m,n}$ $(m \ge 0, \, n \ge 3)$ and 
let $A$ be the $(x,y,z,t)$-adic completion of $R(H)$, where 
$R(H)$ is given as follows. 
\par \vspace{1mm} 
when $n=2k-1,k\ge 2$, 
\begin{equation*}
\begin{aligned}
R(H)
&= k[t,x,y,z]/(xz-yt^{m+1},xy-t^{m+k+1}-zt^{m+2},y^2-zt^{k}-z^2t) \\[2mm]
&= k[t,x,y,z]/I_2
{\small \begin{pmatrix}
x & y & t^k+zt\\[1mm]
t^{m+1} & z & y
\end{pmatrix}}_.
\end{aligned}
\end{equation*}
\par \vspace{1mm}
when $n=2k,k\ge 2$, 
\begin{equation*}
\begin{aligned}
R(H)&= k[t,x,y,z]/(xz-yt^{m+1},xy+xt^{k}-zt^{m+2},y^2+yt^{k}-z^2t)\\[2mm] 
&= k[t,x,y,z]/I_2
{\small \begin{pmatrix}
x & y & zt \\[1mm]
t^{m+1} & z &y+t^k\\
\end{pmatrix}}_.
\end{aligned}
\end{equation*}
Then 
\begin{enumerate}
\item$\tr_{A}(\omega_{A})=(x,y,z,t^a)$, where $a=min\{k,m+1\}$. 
\item${\mathcal X}_{A}=\{(x,y,z,t^i)\,|\, i=1,2,\ldots ,a\}$. 
\item $A$ is nearly Gorenstein if and only if $m=0$. 
\end{enumerate}
\end{prop}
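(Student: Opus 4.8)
The plan is to follow the three-step template used in the proof of Proposition~\ref{Aprop}, now reading the Hilbert--Burch matrix $\mathbb{M}$ off Tyurina's presentation in each parity of $n$. \textbf{Step 1 (the trace ideal).} Since $A$ is a non-Gorenstein Cohen--Macaulay local domain with $e_0(A)=3$, Lemma~\ref{Burch} gives $\tr_A(\omega_A)=I_1(\mathbb{M})A$, the ideal generated by the entries of $\mathbb{M}$. For $n=2k-1$ these entries are $x,y,t^k+zt,t^{m+1},z,y$; for $n=2k$ they are $x,y,zt,t^{m+1},z,y+t^k$. In the first case $z$ is present, so $t^k=(t^k+zt)-zt$ lies in the ideal; in the second, $zt\in(z)$ and $t^k=(y+t^k)-y$ lies in the ideal. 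Hence in both cases $I_1(\mathbb{M})A=(x,y,z,t^k,t^{m+1})=(x,y,z,t^a)$ with $a=\min\{k,m+1\}$, which is (1). For (3): $A$ is nearly Gorenstein iff $t\in\tr_A(\omega_A)=(x,y,z,t^a)$ iff $a=1$; since $n\ge 3$ forces $k\ge 2$ in either parity, this holds exactly when $m=0$.

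\textbf{Step 2 (${\mathcal X}_A\subseteq\{(x,y,z,t^i)\mid 1\le i\le a\}$).} Since $A$ is a rational singularity which is not Gorenstein, Proposition~\ref{Ulrich-Can} yields $I\supseteq\tr_A(\omega_A)=(x,y,z,t^a)$ for every Ulrich ideal $I$. Now setting $x=y=z=0$ in the three relations shows $A/(x,y,z)\cong k[[t]]/(t^{m+k+1})$ when $n=2k-1$ and $A/(x,y,z)\cong k[[t]]$ when $n=2k$, so in either case $A/(x,y,z,t^a)\cong k[[t]]/(t^a)$, whose ideals are linearly ordered; consequently the only ideals $I$ with $(x,y,z,t^a)\subseteq I\subsetneq A$ are $J_i:=(x,y,z,t^i)$ for $i=1,\dots,a$. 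This gives the inclusion, and also $\ell(A/J_i)=i$.

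\textbf{Step 3 (every $J_i$ is an Ulrich ideal).} Fix $i$ with $1\le i\le a$ and put $Q_i=(t^i,\,x+y+z)\subseteq J_i$, which will be a minimal reduction of $J_i$ once $J_i^2=Q_iJ_i$ is verified. I check the two conditions of Lemma~\ref{CriterionUlrich}. First, $J_i^2=Q_iJ_i$: the inequalities $i\le m+1$ and $i\le k$ (whence $2i\le m+k+1$) put $xz=yt^{m+1}$ and $xy$ into $t^iJ_i$, so $x^2=(x+y+z)x-xy-xz\in Q_iJ_i$; expanding $(x+y+z)y$ and $(x+y+z)z$ then puts $y^2+yz$ and $yz+z^2$ into $Q_iJ_i$, while the third defining relation gives $y^2-tz^2\in t^iJ_i$, and forming differences and cancelling the unit $1-t$ isolates $z^2$, hence also $y^2$ and $yz$; the remaining generators of $J_i^2$ lie in $t^iJ_i$. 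Second, eliminating $x=-(y+z)$ and reducing modulo $t^i$ collapses the three defining relations (again because $i\le\min\{k,m+1\}$) to $y^2\equiv yz\equiv z^2\equiv 0$, so $A/Q_i\cong k[[t,y,z]]/(t^i,y^2,yz,z^2)$ and $e_0(J_i)=\ell(A/Q_i)=3i$. Since $\mu(J_i)=4$ and $\ell(A/J_i)=i$, we get $e_0(J_i)=3i=(\mu(J_i)-\dim A+1)\,\ell(A/J_i)$, so $J_i$ is an Ulrich ideal. Together with Step~2 this proves (2).

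The main obstacle is the bookkeeping in Step~3. The defining relations hand over only the combinations $y^2+yz$, $yz+z^2$ and $y^2-tz^2$ directly, so separating the individual products $y^2,yz,z^2$ via the unit $1-t$ --- and, before that, verifying the exponent inequalities that force the $t$-monomials occurring in the relations into $t^iJ_i$ rather than merely into $J_i$ --- is what must be pushed through carefully, and it has to be carried out for each parity of $n$, the two computations being parallel but not literally identical.
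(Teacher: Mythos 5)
Your proof is correct and follows essentially the same route as the paper: Lemma~\ref{Burch} for the trace ideal, Proposition~\ref{Ulrich-Can} to bound ${\mathcal X}_A$ from above, and Lemma~\ref{CriterionUlrich} applied to each $J_i=(x,y,z,t^i)$. The only deviation is your choice of reduction $Q_i=(t^i,\,x+y+z)$ (carried over from the $A_{\ell,m,n}$ case), which necessitates the $(1-t)$-unit cancellation to isolate $y^2,yz,z^2$; the paper instead takes $Q_i=(t^i,\,x+z)$, so these products fall into $Q_iJ_i$ directly from the defining relations, but both computations are valid.
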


\begin{proof}
We may assume $n=2k-1$, $k \ge 2$.  
Similarly one can prove the assertion in the case of $n=2k$.  
\par \vspace{1mm}
(1) By Lemma \ref{Burch}, we have  
\[
\tr_A(\omega_{A})=(x,y,t^k+zt, t^{m+1}, z,y)=(x,y,z,t^a), 
\]
where $a=\min\{k,m+1\}$. 
Moreover, (3) follows from here. 
\par \vspace{1mm}
(2) (1) and Proposition \ref{Ulrich-Can} imply 
${\mathcal X}_A \subset \{(x,y,z,t^{i}) \,|\, (i=1,2,\ldots,a) \}$. 
Let us show the converse. 
Fix $i$ with $1 \le i \le a$ and  
put $J_i=(x,y,z,t^{i})$, $Q_i=(t^i,x+z)$. 
Then it is enough to show that $J_i$ is an Ulrich ideal. 
In fact, one can easily see that $y^2,yz,z^2 \in Q_iJ_i$ and thus 
$J_i^2=Q_iJ_i$ as follows:
\begin{eqnarray*}
y^2 &= & zt^k + z^2t=zt^k+z(x+z)t-zxt=zt^k+z(x+z)t-yt^{m+2} \in Q_iJ_i. \\
yz & = &  y(x+z)-xy=y(x+z)-t^{m+k+1}+zt^{m+2} \in Q_iJ_i. \\
z^2 & = & z(x+z)-xz=z(x+z)-yt^{m+1} \in Q_iJ_i. 
\end{eqnarray*}
Moreover, we have 
$e_0(J_i)=3i=(\mu(J_i)-1)\ell(A/J_i)$. 
\end{proof}

\par \vspace{3mm}
\begin{prop} \label{Cprop}
Suppose $H=C_{m,n}$ $(m \ge 0,\,n \ge 4)$ and 
let $A$ be the $(x,y,z,t)$-adic completion of $R(H)$, where 
\begin{equation*}
\begin{aligned}
R(H)&=k[t,x,y,z]/(xz-yt^{m+1},xy-t^{m+3}-z^{n-1}t^{m+1},y^2-zt^{2}-z^{n})\\[2mm] 
&= k[t,x,y,z]/I_2
{\small \begin{pmatrix}
x & y & t^2+z^{n-1} \\[1mm]
t^{m+1} & z & y
\end{pmatrix}}_.
\end{aligned}
\end{equation*}
Then 
\begin{enumerate}
\item$\tr_{A}(\omega_{A})=(x,y,z,t^a)$, where $a=min\{2,m+1\}$. 
\item${\mathcal X}_{A}=\{(x,y,z,t^i)\,|\, i=1,2,\ldots ,a\}$. 
\item $A$ is nearly Gorenstein if and only if $m=0$. 
\end{enumerate}
\end{prop}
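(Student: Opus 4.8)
The plan is to run the argument of Propositions~\ref{Aprop} and~\ref{Bprop} with the Hilbert--Burch matrix $\mathbb{M}$ displayed in the statement. For parts (1) and (3): a rational triple point is a non-Gorenstein Cohen--Macaulay local domain with $e_0(A)=3$ and $\CMtype(A)=2$, so Lemma~\ref{Burch} applies and yields $\tr_A(\omega_A)=I_1(\mathbb{M})A=(x,y,z,t^{m+1},t^2+z^{n-1})$. Since $z$ lies in this ideal, so does $z^{n-1}$, hence $t^2$ as well, and therefore $\tr_A(\omega_A)=(x,y,z,t^{m+1},t^2)=(x,y,z,t^a)$ with $a=\min\{2,m+1\}$; this is (1). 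For (3), setting $x=y=z=0$ in the three defining equations shows $A/(x,y,z)\cong k[[t]]/(t^{m+3})$, so $A$ is nearly Gorenstein, i.e.\ $t\in(x,y,z,t^a)$, precisely when $a=1$, which means $m=0$.

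For the inclusion ``$\subseteq$'' in (2): by (1) and Proposition~\ref{Ulrich-Can}, every Ulrich ideal contains $\tr_A(\omega_A)=(x,y,z,t^a)$. Since $a\le m+3$ we have $A/(x,y,z,t^a)\cong k[[t]]/(t^a)$, whose ideals are the $(t^i)$ with $0\le i\le a$; hence the only ideals of $A$ lying between $(x,y,z,t^a)$ and $A$ are $(x,y,z,t^i)$ with $0\le i\le a$, and discarding $A$ itself gives ${\mathcal X}_A\subseteq\{(x,y,z,t^i)\mid 1\le i\le a\}$.

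For the reverse inclusion I would fix $i$ with $1\le i\le a$, put $J_i=(x,y,z,t^i)$ and $Q_i=(t^i,x+z)$, and verify the criterion of Lemma~\ref{CriterionUlrich}. To get $J_i^2=Q_iJ_i$ it is enough to place $x^2,xy,xz,y^2,yz,z^2$ into $Q_iJ_i$, the products involving $t^i$ being automatic. Using the relations $xz=yt^{m+1}$, $xy=t^{m+3}+z^{n-1}t^{m+1}$, $y^2=zt^2+z^n$ together with the identities $x^2=x(x+z)-xz$, $z^2=z(x+z)-xz$, $yz=y(x+z)-xy$ and $z^n=z^{n-2}\cdot z^2$, everything reduces to the facts that $t^{m+1},t^{m+3},t^2\in(t^i)$ — which hold because $i\le a\le m+1$ and $i\le a\le 2$ — and that $z^2\in Q_iJ_i$. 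Finally $\ell(A/J_i)=\ell(k[[t]]/(t^i))=i$, while substituting $x=-z$ into the defining equations and noting that every positive power of $t$ occurring vanishes modulo $t^i$ gives $A/Q_i\cong k[[t,y,z]]/(t^i,y^2,yz,z^2)$, so $e_0(J_i)=\ell(A/Q_i)=3i=(\mu(J_i)-1)\,\ell(A/J_i)$, and $J_i$ is an Ulrich ideal.

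The computations are routine and parallel Propositions~\ref{Aprop} and~\ref{Bprop}; the one point needing a little care is that, unlike those cases, the matrix entries here carry high powers of $z$, so in checking $J_i^2=Q_iJ_i$ the term $z^n$ must be absorbed indirectly via $z^2\in Q_iJ_i$ rather than handled term by term — but this causes no genuine difficulty.
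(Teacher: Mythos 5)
Your proposal is correct and follows essentially the same route as the paper's proof: Lemma~\ref{Burch} gives $\tr_A(\omega_A)=(x,y,z,t^a)$ (and hence (3)), Proposition~\ref{Ulrich-Can} gives the inclusion ${\mathcal X}_A\subseteq\{(x,y,z,t^i)\}$, and the reverse inclusion is verified with the same $J_i=(x,y,z,t^i)$, $Q_i=(t^i,x+z)$ and the same identities $z^2=z(x+z)-yt^{m+1}$, $yz=y(x+z)-t^{m+3}-z^{n-1}t^{m+1}$, $y^2=zt^2+z^n$, followed by the multiplicity count $e_0(J_i)=3i=(\mu(J_i)-1)\ell(A/J_i)$. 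The only differences are cosmetic: you spell out a couple of points the paper leaves implicit (that the ideals lying over the trace ideal are exactly the $(x,y,z,t^i)$ via $A/(x,y,z)\cong k[[t]]/(t^{m+3})$, and that $z^n$ is absorbed through $z^2\in Q_iJ_i$).
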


\begin{proof}
(1) By Lemma \ref{Burch}, we have  
\[
\tr_A(\omega_{A})=(x,y,t^2+z^{n-1}, t^{m+1}, z, y)=(x,y,z,t^a), 
\]
where $a=\min\{2,m+1\}$. 
Moreover, (3) follows from here. 
\par \vspace{1mm}
(2) (1) and Proposition \ref{Ulrich-Can} imply 
${\mathcal X}_A \subset \{(x,y,z,t^{i}) \,|\, (i=1,2,\ldots,a) \}$. 
Let us show the converse. 
Fix $i$ with $1 \le i \le a$ and  
put $J_i=(x,y,z,t^{i})$, $Q_i=(t^i,x+z)$. 
Then it is enough to show that $J_i$ is an Ulrich ideal. 
In fact, one can easily see that $y^2,yz,z^2 \in Q_iJ_i$  and thus 
$J_i^2=Q_iJ_i$ as follows:
\begin{eqnarray*}
z^2 & = & z(x+z)-xz=z(x+z)-yt^{m+1} \in Q_iJ_i.\\
y^2 &= & zt^2+ z^n \in Q_iJ_i. \\
yz & = &  y(x+z)-xy=y(x+z)-t^{m+3}-z^{n-1}t^{m+1} \in Q_iJ_i. \\
\end{eqnarray*}
Moreover, we have 
$e_0(J_i)=3i=(\mu(J_i)-1)\ell(A/J_i)$. 
\end{proof}

\par \vspace{3mm}
\begin{prop} \label{Dprop}
Suppose $H=D_{n}$ $(n \ge 0)$ and 
let $A$ be the $(x,y,z,t)$-adic completion of $R(H)$, where 
\begin{equation*}
\begin{aligned}
R(H)&=k[t,x,y,z]/(xz-yt^{n+1},xy+xt^2-z^{2}t^{n+1},y^2+yt^2-z^3) \\[2mm]
&= k[t,x,y,z]/I_2
{\small \begin{pmatrix}
x & y & z^2 \\[1mm]
t^{n+1} & z & y+t^2 
\end{pmatrix}}_.
\end{aligned}
\end{equation*}
Then 
\begin{enumerate}
\item$\tr_{A}(\omega_{A})=(x,y,z,t^a)$, where $a=min\{2,n+1\}$. 
\item${\mathcal X}_{A}=\{(x,y,z,t^i)\,|\, i=1,2,\ldots ,a\}$. 
\item $A$ is nearly Gorenstein if and only if $n=0$. 
\end{enumerate}
\end{prop}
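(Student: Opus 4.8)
The plan is to follow the same template used for Propositions~\ref{Aprop}, \ref{Bprop} and \ref{Cprop}. For part (1), observe that $A$ is a non-Gorenstein Cohen--Macaulay local domain with $e_0(A)=3$, so Lemma~\ref{Burch} applies and gives $\tr_A(\omega_A)=I_1(\mathbb{M})A$, where $\mathbb{M}$ is the Hilbert--Burch matrix displayed in the statement. Reading off its entries, $\tr_A(\omega_A)=(x,\,y,\,z^2,\,t^{n+1},\,z,\,y+t^2)A$; absorbing $z^2$ into $z$, using $t^2=(y+t^2)-y$, and noting that $(t^2,t^{n+1})=(t^{\min\{2,n+1\}})$, this collapses to $(x,y,z,t^a)$ with $a=\min\{2,n+1\}$. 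Part (3) then drops out at once: $A$ is nearly Gorenstein iff $\m=(x,y,z,t)\subseteq(x,y,z,t^a)$, i.e. iff $t\in(x,y,z,t^a)$, i.e. iff $a=1$, i.e. iff $n=0$.

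For the inclusion ${\mathcal X}_A\subseteq\{(x,y,z,t^i)\mid i=1,\dots,a\}$ in part (2), I would invoke Proposition~\ref{Ulrich-Can}: every Ulrich ideal $I$ of $A$ contains $\tr_A(\omega_A)=(x,y,z,t^a)$. Since each of the three defining relations lies in $(x,y,z)$, we have $A/(x,y,z)\cong k[[t]]$, so the only ideals between $(x,y,z,t^a)$ and $A$ are $(x,y,z,t^i)$ for $i=0,1,\dots,a$; discarding the improper one $i=0$ leaves exactly the asserted list, with $i=1$ recovering $\m$.

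For the reverse inclusion, fix $i$ with $1\le i\le a$ and set $J_i=(x,y,z,t^i)$ and $Q_i=(t^i,\,x+z)$; it suffices to show $J_i$ is Ulrich. First I would verify $J_i^2=Q_iJ_i$: reducing the three $2\times 2$ minors of $\mathbb{M}$ modulo $Q_i$, so that $x\equiv -z$ and, crucially, $t^{n+1}\equiv t^2\equiv 0$ (because $i\le a\le n+1$ and $i\le 2$), shows $z^2,\,yz,\,y^2\in Q_iJ_i$, while $x^2,xy,xz\in Q_iJ_i$ follows directly from the minor relations $xz=yt^{n+1}$, $xy=z^2t^{n+1}-xt^2$ and $x^2=x(x+z)-yt^{n+1}$. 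The same reduction identifies $A/Q_i\cong k[[t,y,z]]/(t^i,y^2,yz,z^2)$, so that $e_0(J_i)=\ell(A/Q_i)=3i=(\mu(J_i)-1)\,\ell(A/J_i)$, and Lemma~\ref{CriterionUlrich} closes the case. Since $a\le 2$, only $i=1$ (with $J_1=\m$) and $i=2$ actually occur, so I do not expect a genuine obstacle; the one point deserving care is checking that both $t^{n+1}$ and $t^2$ vanish in $A/Q_i$, which is precisely where the constraint $i\le a=\min\{2,n+1\}$ is used, together with the mild bookkeeping needed to confirm that all quadratic generators of $J_i^2$ land in $Q_iJ_i$.
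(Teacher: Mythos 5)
Your proposal is correct and follows essentially the same route as the paper: Lemma~\ref{Burch} for the trace ideal, Proposition~\ref{Ulrich-Can} together with $A/(x,y,z)\cong k[[t]]$ for the inclusion ${\mathcal X}_A\subseteq\{(x,y,z,t^i)\}$, and the same choice $J_i=(x,y,z,t^i)$, $Q_i=(t^i,x+z)$ with the identities $z^2=z(x+z)-yt^{n+1}$, $y^2=z^3-yt^2$, $yz=y(x+z)-z^2t^{n+1}+xt^2$ plus $e_0(J_i)=3i=(\mu(J_i)-1)\ell(A/J_i)$ and Lemma~\ref{CriterionUlrich}. The only point to phrase carefully is that ``reducing modulo $Q_i$'' by itself only gives membership in $Q_i$, but your explicit relations (with $t^2,t^{n+1}\in t^i A$ multiplied by elements of $J_i$) do establish membership in $Q_iJ_i$, exactly as in the paper.
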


\begin{proof}
(1) By Lemma \ref{Burch}, we have  
\[
\tr_A(\omega_{A})=(x,y,z^2, t^{n+1}, z, y+t^2)=(x,y,z,t^a), 
\]
where $a=\min\{2,n+1\}$. 
Moreover, (3) follows from here. 
\par \vspace{1mm}
(2) (1) and Proposition \ref{Ulrich-Can} imply 
${\mathcal X}_A \subset \{(x,y,z,t^{i}) \,|\, (i=1,2,\ldots,a) \}$. 
Let us show the converse. 
Fix $i$ with $1 \le i \le a$ and  
put $J_i=(x,y,z,t^{i})$, $Q_i=(t^i,x+z)$. 
Then it is enough to show that $J_i$ is an Ulrich ideal. 
In fact, one can easily see that $y^2,yz,z^2 \in Q_iJ_i$  and thus 
$J_i^2=Q_iJ_i$ as follows:
\begin{eqnarray*}
z^2 & = & z(x+z)-xz=z(x+z)-yt^{n+1} \in Q_iJ_i.\\
y^2 &= & z^3-yt^2 \in Q_iJ_i. \\
yz & = &  y(x+z)-xy=y(x+z)-z^2t^{n+1}+xt^2 \in Q_iJ_i. \\
\end{eqnarray*}
Moreover, we have 
$e_0(J_i)=3i=(\mu(J_i)-1)\ell(A/J_i)$. 
\end{proof}

\par \vspace{3mm}
\begin{prop} \label{Fprop}
Suppose $H=F_{n}$ $(n \ge 0)$ and 
let $A$ be the $(x,y,z,t)$-adic completion of $R(H)$, where 
\begin{equation*}
\begin{aligned}
R(H)&=k[t,x,y,z]/(xz-yt^{n+1},xy-t^{n+4}-z^{2}t^{n+1},y^2-zt^{3}-z^3)\\[2mm]
&= k[t,x,y,z]/I_2
{\small \begin{pmatrix}
x & y & t^3+z^2 \\[1mm]
t^{n+1} & z & y
\end{pmatrix}}_.
\end{aligned}
\end{equation*}
Then 
\begin{enumerate}
\item$\tr_{A}(\omega_{A})=(x,y,z,t^a)$, where $a=min\{3,n+1\}$. 
\item${\mathcal X}_{A}=\{(x,y,z,t^i)\,|\, i=1,2,\ldots ,a\}$. 
\item $A$ is nearly Gorenstein if and only if $n=0$. 
\end{enumerate}
\end{prop}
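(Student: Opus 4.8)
The plan is to follow verbatim the strategy of Propositions \ref{Aprop}--\ref{Dprop}.

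For (1), the Hilbert--Burch matrix is
\[
\mathbb{M}=\begin{pmatrix} x & y & t^3+z^2\\ t^{n+1} & z & y\end{pmatrix},
\]
so I would apply Lemma \ref{Burch} to get $\tr_A(\omega_A)=I_1(\mathbb{M})A=(x,y,z,\,t^3+z^2,\,t^{n+1})=(x,y,z,t^a)$ with $a=\min\{3,n+1\}$: here $t^3=(t^3+z^2)-z^2$ already lies in the ideal, and since $A/(x,y,z)\cong k[[t]]/(t^{n+4})$ the $t$-part of $\tr_A(\omega_A)$ is exactly $(t^{\min\{3,n+1\}})$. Part (3) then drops out, as $\tr_A(\omega_A)\supseteq\m$ if and only if $a=1$, i.e.\ $n=0$.

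For (2), the inclusion $\mathcal{X}_A\subseteq\{(x,y,z,t^i)\mid 1\le i\le a\}$ is forced by (1) together with Proposition \ref{Ulrich-Can}: every Ulrich ideal $I$ contains $\tr_A(\omega_A)=(x,y,z,t^a)$, and since $A/(x,y,z,t^a)\cong k[[t]]/(t^a)$ the only proper ideals of $A$ containing $(x,y,z,t^a)$ are the $(x,y,z,t^i)$ with $1\le i\le a$. For the reverse inclusion I would fix such an $i$, put $J_i=(x,y,z,t^i)$ and $Q_i=(t^i,x+z)$, and prove $J_i$ is Ulrich in two steps. First, check $z^2,y^2,yz\in Q_iJ_i$, whence $J_i^2=Q_iJ_i$, by reading off
\[
z^2=z(x+z)-yt^{n+1},\qquad y^2=zt^3+z^3,\qquad yz=y(x+z)-t^{n+4}-z^2t^{n+1}
\]
from the three defining relations and using $i\le n+1$ for the first, $i\le 3$ together with the already-proved $z^2\in Q_iJ_i$ for the second, and both bounds for the third. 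Second, note that $Q_i$ is then a minimal reduction of $J_i$ and compute
\[
e_0(J_i)=\ell(A/Q_i)=\ell\big(k[[t,y,z]]/(t^i,y^2,yz,z^2)\big)=3i=(\mu(J_i)-1)\,\ell(A/J_i),
\]
so $J_i\in\mathcal{X}_A$ by Lemma \ref{CriterionUlrich}.

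The one place that needs care is the monomial bookkeeping behind $J_i^2=Q_iJ_i$: it hinges on $i\le a=\min\{3,n+1\}$, which is exactly what makes $t^3$ and $t^{n+1}$ divisible by $t^i$ inside $Q_iJ_i$. Once $a$ has been pinned down correctly in step (1) this is only a short finite verification, of the same shape as in the $C_{m,n}$- and $D_n$-cases already done, and no genuinely new difficulty arises: the structural input needed to invoke Proposition \ref{Ulrich-Can}, namely that every rational triple point has $\CMtype(A)=2$ (so $\mu(J_i)=4$), is already available.
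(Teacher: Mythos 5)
Your proposal is correct and follows essentially the same route as the paper: Lemma~\ref{Burch} gives $\tr_A(\omega_A)=(x,y,z,t^a)$, Proposition~\ref{Ulrich-Can} gives the inclusion $\mathcal{X}_A\subseteq\{(x,y,z,t^i)\}$, and the converse uses exactly the same $J_i=(x,y,z,t^i)$, $Q_i=(t^i,x+z)$, the same three identities for $z^2,y^2,yz$, and the same length count $e_0(J_i)=3i=(\mu(J_i)-1)\ell(A/J_i)$ via Lemma~\ref{CriterionUlrich}. The only cosmetic caveat is that $\mu(J_i)=4$ should be read off directly from the explicit generators (all relations have order $\ge 2$) rather than from $\CMtype(A)=2$, which pins down $\mu$ only for ideals already known to be Ulrich.
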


\begin{proof}
(1) By Lemma \ref{Burch}, we have  
\[
\tr_A(\omega_{A})=(x,y,z^2, t^3+z^2, t^{n+1}, z, y)=(x,y,z,t^a), 
\]
where $a=\min\{3,n+1\}$. 
Moreover, (3) follows from here. 
\par \vspace{1mm}
(2) (1) and Proposition \ref{Ulrich-Can} imply 
${\mathcal X}_A \subset \{(x,y,z,t^{i}) \,|\, (i=1,2,\ldots,a) \}$. 
Let us show the converse. 
Fix $i$ with $1 \le i \le a$ and  
put $J_i=(x,y,z,t^{i})$, $Q_i=(t^i,x+z)$. 
Then it is enough to show that $J_i$ is an Ulrich ideal. 
In fact, one can easily see that $y^2,yz,z^2 \in Q_iJ_i$ and thus 
$J_i^2=Q_iJ_i$  as follows:
\begin{eqnarray*}
z^2 & = & z(x+z)-xz=z(x+z)-yt^{n+1} \in Q_iJ_i.\\
y^2 &= & z^3+zt^3 \in Q_iJ_i. \\
yz & = &  y(x+z)-xy=y(x+z)-t^{n+4}-z^2t^{n+1} \in Q_iJ_i. \\
\end{eqnarray*}
Moreover, we have 
$e_0(J_i)=3i=(\mu(J_i)-1)\ell(A/J_i)$. 
\end{proof}

\par \vspace{3mm}
\begin{prop} \label{Hprop}
Suppose $H=H_{n}$ $(n \ge 5)$ and 
let $A$ be the $(x,y,z,t)$-adic completion of $R(H)$, where 
$R(H)$ is given as follows: 
\par 
when $n=3k-1,k\ge2$, \\
\begin{equation*}
\begin{aligned}
R(H_{n})&=k[t,x,y,z]/(x^2-yzt-yt^{k},xy-z^{2}t-zt^{k},y^2-xz)\\[2mm]
&= k[t,x,y,z]/I_2
{\small \begin{pmatrix}
x & y & zt+t^k\\[1mm]
y & z & x\\
\end{pmatrix}}_.
\end{aligned}
\end{equation*}
\par 
when $n=3k ,k\ge2$, \\
\begin{equation*}
\begin{aligned}
R(H_{n})&=k[t,x,y,z]/(x^2+xt^{k}-yzt,xy-z^{2}t+yt^{k},y^2-xz)\\[2mm] 
&= k[t,x,y,z]/I_2
{\small \begin{pmatrix}
x & y & zt\\[1mm]
y & z & x+t^k\\
\end{pmatrix}}_.
\end{aligned}
\end{equation*}
\par
when $n=3k+1 ,k\ge2$, \\
\begin{equation*}
\begin{aligned}
R(H_{n})&=k[t,x,y,z]/(x^2-yzt-zt^{k+1},xy-z^{2}t,y^2-xz+yt^{k})\\[2mm] 
&= k[t,x,y,z]/I_2
{\small \begin{pmatrix}
x & y & zt \\[1mm]
y+t^k & z & x\\
\end{pmatrix}}_.
\end{aligned}
\end{equation*}
Then 
\begin{enumerate}
\item$\tr_{A}(\omega_{A})=(x,y,z,t^k)$. 
\item${\mathcal X}_{A}=\{(x,y,z,t^i)\,|\, i=1,2,\ldots ,k\}$. 
\item $A$ is not nearly Gorenstein. 
\end{enumerate}
\end{prop}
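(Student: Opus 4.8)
The plan is to treat $H_n$ exactly as in the five preceding propositions, splitting into the three subcases $n=3k-1$, $n=3k$, $n=3k+1$ with $k\ge 2$. First I would read the Hilbert--Burch matrix $\mathbb{M}$ off the presentation $R(H_n)=k[t,x,y,z]/I_2(\mathbb{M})$ and compute $\tr_A(\omega_A)=I_1(\mathbb{M})A$ via Lemma~\ref{Burch}. In each subcase the ideal of entries of $\mathbb{M}$ is $(x,y,z,t^k)$: the entry $zt$ is redundant since $zt\in(z)$, and the remaining nonlinear entry ($zt+t^k$, $x+t^k$, or $y+t^k$) contributes $t^k$ once $x,y,z$ are available. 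This proves (1). Moreover every monomial occurring in the $2\times 2$ minors of $\mathbb{M}$ involves one of $x,y,z$, so $A/(x,y,z)\cong k[[t]]$; hence $t\notin(x,y,z,t^k)$ when $k\ge 2$, so $\tr_A(\omega_A)\not\supset\m$ and $A$ is not nearly Gorenstein, which is (3).

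For (2), Proposition~\ref{Ulrich-Can} combined with (1) gives ${\mathcal X}_A\subset\{J\mid J\supset(x,y,z,t^k)\}$, and since $A/(x,y,z,t^k)\cong k[[t]]/(t^k)$ the $\m$-primary ideals containing $(x,y,z,t^k)$ are precisely $J_i:=(x,y,z,t^i)$ for $1\le i\le k$ (with $J_1=\m$). Thus it suffices to show each $J_i$ is an Ulrich ideal, and for this I would use Lemma~\ref{CriterionUlrich} with the reduction $Q_i:=(z,t^i)$.

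The verification that $Q_i$ works runs as follows. Because $i\le k$, one has $t^k\in(t^i)$ and $zt\in(z)$, so every "third-column" term occurring in the three defining relations lies in $Q_i\subset J_i$; for $n=3k-1$ the relations read $y^2=xz$, $x^2=y(zt+t^k)$, $xy=z(zt+t^k)$, and the other two subcases are of the same shape. Reading these off exhibits each of $x^2,xy,y^2$ as a product of an element of $J_i$ with an element of $Q_i$, so $x^2,xy,y^2\in Q_iJ_i$; since $J_i=Q_i+(x,y)$, this gives $J_i^2=Q_iJ_i$. Modulo $(z,t)$ the relations force $x^2=xy=y^2=0$, so $A/(z,t)$ is Artinian, $Q_i$ is a parameter ideal and hence a minimal reduction of $J_i$, and reducing modulo $(z,t^i)$ kills the right-hand sides, giving $A/(z,t^i)\cong k[[x,y,t]]/(x^2,xy,y^2,t^i)$, of length $3i$. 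Hence $e_0(J_i)=\ell(A/Q_i)=3i$; together with $\ell(A/J_i)=\ell(k[[t]]/(t^i))=i$ and $\mu(J_i)=4$ this yields $e_0(J_i)=3i=(\mu(J_i)-\dim A+1)\,\ell(A/J_i)$, so $J_i\in{\mathcal X}_A$ by Lemma~\ref{CriterionUlrich}. This proves (2).

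Since the computation is uniform across the three subcases, the bulk of the work is bookkeeping rather than a real obstacle. The two steps warranting care are the isomorphism $A/(z,t^i)\cong k[[x,y,t]]/(x^2,xy,y^2,t^i)$ --- equivalently, that $(z,t^i)$ is a genuine system of parameters and not merely a subideal of $J_i$ --- and the equality $\mu(J_i)=4$ rather than $3$, which holds because $t^i\notin(x,y,z)+\m J_i$, as seen by passing to $A/(x,y,z)=k[[t]]$.
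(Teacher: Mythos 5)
Your proposal is correct and follows essentially the same route as the paper: compute $\tr_A(\omega_A)=I_1(\mathbb{M})A=(x,y,z,t^k)$ via Lemma~\ref{Burch}, bound ${\mathcal X}_A$ from above by Proposition~\ref{Ulrich-Can}, and then verify each $J_i=(x,y,z,t^i)$ is Ulrich with the reduction $Q_i=(z,t^i)$ and Lemma~\ref{CriterionUlrich}. You merely spell out a few points the paper leaves implicit (the quotient $A/(z,t^i)$ giving $\ell(A/Q_i)=3i$, and $\mu(J_i)=4$), and your reading of the relations is in fact cleaner than the paper's displayed computation, which contains small typos.
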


\begin{proof}
We may assume $n=3k-1$, $k \ge 2$.  
Similarly one can prove the assertion in the case of $n=3k$ or $3k+1$.  
(1) By Lemma \ref{Burch}, we have  
\[
\tr_A(\omega_{A})=(x,y,zt+t^k, y, z, x)=(x,y,z,t^k). 
\] 
Moreover, (3) follows from here. 
\par \vspace{1mm}
(2) (1) and Proposition \ref{Ulrich-Can} imply 
${\mathcal X}_A \subset \{(x,y,z,t^{i}) \,|\, (i=1,2,\ldots,k) \}$. 
Let us show the converse. 
Fix $i$ with $1 \le i \le k$ and  
put $J_i=(x,y,z,t^{i})$, $Q_i=(t^i, z)$. 
Then it is enough to show that $J_i$ is an Ulrich ideal. 
In fact, one can easily see that $y^2,yz,z^2 \in Q_iJ_i$  and thus 
$J_i^2=Q_iJ_i$ as follows:
\begin{eqnarray*}
x^2 & = & yzt+t^k \in Q_iJ_i\\
y^2 &= & xz \in Q_iJ_i \\
xy & = &  y(x+z)-xy=z^2t+zt^k \in Q_iJ_i \\
\end{eqnarray*}
Moreover, we have 
$e_0(J_i)=3i=(\mu(J_i)-1)\ell(A/J_i)$. 
\end{proof}

\par \vspace{3mm}
\begin{prop} \label{G1prop}
Suppose $H=\Gamma_1$ and 
let $A$ be the $(x,y,z,t)$-adic completion of $R(H)$, where 
\begin{equation*}
\begin{aligned}
R(H)&=k[t,x,y,z]/(x^2-yt^2+xz^2,xy-zt^2+yz^2,y^2-xz)\\[2mm] 
&= k[t,x,y,z]/I_2
{\small \begin{pmatrix}
x & y & t^2\\[1mm]
y & z & x+z^2\\
\end{pmatrix}}_.
\end{aligned}
\end{equation*}
Then 
\begin{enumerate}
\item$\tr_{A}(\omega_{A})=(x,y,z,t^2)$. 
\item${\mathcal X}_{A}=\{(x,y,z,t^i)\,|\, i=1,2\}$. 
\item $A$ is not nearly Gorenstein. 
\end{enumerate}
\end{prop}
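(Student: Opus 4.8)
The plan is to argue exactly as in Propositions \ref{Aprop}--\ref{Hprop}. Write $\mathbb{M}={\small\begin{pmatrix} x & y & t^2\\ y & z & x+z^2\end{pmatrix}}$, so $\fra=I_2(\mathbb{M})$ and the three $2\times2$ minors give, in $A$, the relations $y^2=xz$, $x^2=yt^2-xz^2$ and $xy=zt^2-yz^2$. For (1), Lemma \ref{Burch} applies verbatim:
\[
\tr_A(\omega_A)=I_1(\mathbb{M})A=(x,y,t^2,z,x+z^2)=(x,y,z,t^2).
\]
For (3): since $x,y,z,t$ is a minimal system of generators of $\m$ (recall $\mu(\m)=e_0(A)+\dim A-1=4$ for a rational triple point) while $t^2\in\m^2$, we have $(x,y,z,t^2)\subsetneq\m$; hence $\tr_A(\omega_A)\not\supseteq\m$, so $A$ is not nearly Gorenstein, and $\res(A)=\ell(A/(x,y,z,t^2))=2$.

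For the inclusion ``$\subseteq$'' in (2): by (1) and Proposition \ref{Ulrich-Can}, every Ulrich ideal $I$ satisfies $I\supseteq(x,y,z,t^2)$. Since $A/(x,y,z,t^2)\cong k[[t]]/(t^2)$ has only the ideals $(0)$ and $(\bar t)$, the only ideals between $(x,y,z,t^2)$ and $A$ are $(x,y,z,t^2)$ itself and $\m=(x,y,z,t)$, so $\mathcal{X}_A\subseteq\{(x,y,z,t^i)\mid i=1,2\}$.

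For the reverse inclusion I would check that both ideals are Ulrich. The ideal $\m$ is Ulrich because $A$, a two-dimensional rational singularity, has minimal multiplicity. For $J_2=(x,y,z,t^2)$ I would use the parameter ideal $Q_2=(t^2,z)$: the three relations give $y^2=xz\in zJ_2$ and $x^2=yt^2-xz^2$, $xy=zt^2-yz^2\in t^2J_2+zJ_2$ (note $x,y,z,t^2\in J_2$, so all right-hand terms lie in $Q_2J_2$), whence $J_2^2=Q_2J_2$. Setting $z=t^2=0$ collapses the three minors to $y^2=x^2=xy=0$, so $A/Q_2\cong k[[t,x,y]]/(t^2,x^2,y^2,xy)$, with $k$-basis $1,x,y,t,tx,ty$; thus $e_0(J_2)=\ell(A/Q_2)=6=(\mu(J_2)-\dim A+1)\,\ell(A/J_2)=3\cdot 2$, and $J_2$ is Ulrich by Lemma \ref{CriterionUlrich}. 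With $\m\in\mathcal{X}_A$ this yields (2).

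The argument involves no difficulty beyond Propositions \ref{Aprop}--\ref{Hprop}; the only point requiring care is the choice of minimal reduction $Q_2=(t^2,z)$ — a reduction of the form $(t^2,x+z)$, as used in the $A_{\ell,m,n}$ and $B_{m,n}$ cases, would not reduce $y^2=xz$ into $Q_2J_2$ — together with the colength count $\ell(A/(t^2,z))=6$ that makes the numerical criterion of Lemma \ref{CriterionUlrich} close the argument.
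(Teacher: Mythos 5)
Your proof is correct and takes essentially the same route as the paper's: Lemma~\ref{Burch} for (1), Proposition~\ref{Ulrich-Can} for the inclusion ${\mathcal X}_A\subseteq\{(x,y,z,t),(x,y,z,t^2)\}$, and the reduction $Q_i=(t^i,z)$ together with the numerical criterion of Lemma~\ref{CriterionUlrich} for the converse, with your explicit length computations ($\ell(A/Q_2)=6$, $\ell(A/J_2)=2$) simply spelling out the paper's ``$e_0(J_i)=3i=(\mu(J_i)-1)\ell(A/J_i)$''. Only your closing aside is inaccurate: $(t^2,x+z)$ would in fact also work, since $xz=x(x+z)-yt^2+xz^2$ gives $xz(1-z)\in (t^2,x+z)J_2$ and $1-z$ is a unit, so $y^2=xz$ does land in that reduction times $J_2$ as well --- but this does not affect the argument you actually give.
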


\begin{proof}
(1) By Lemma \ref{Burch}, we have  
\[
\tr_A(\omega_{A})=(x,y,t^2, y, z, x+z^2)=(x,y,z,t^2). 
\] 
Moreover, (3) follows from here. 
\par \vspace{1mm}
(2) (1) and Proposition \ref{Ulrich-Can} imply 
${\mathcal X}_A \subset \{(x,y,z,t^{i}) \,|\, (i=1,2) \}$. 
Let us show the converse. 
Fix $i$ with $I=1,2$ and  
put $J_i=(x,y,z,t^{i})$, $Q_i=(t^i, z)$. 
Then it is enough to show that $J_i$ is an Ulrich ideal. 
In fact, one can easily see that $y^2,yz,z^2 \in Q_iJ_i$  and thus 
$J_i^2=Q_iJ_i$ as follows:
\begin{eqnarray*}
x^2 & = & yt^2-xz^2 \in Q_iJ_i.\\
xy &= & zt^2-yz^2 \in Q_iJ_i. \\
y^2 & = &  xz \in Q_iJ_i. \\
\end{eqnarray*}
Moreover, we have 
$e_0(J_i)=3i=(\mu(J_i)-1)\ell(A/J_i)$. 
\end{proof}

\par \vspace{3mm}
\begin{prop} \label{G2prop}
Suppose $H=\Gamma_2$ and 
let $A$ be the $(x,y,z,t)$-adic completion of $R(H)$, where 
\begin{equation*}
\begin{aligned}
R(\Gamma_{2})&=k[t,x,y,z]/(x^2-yz^2+xt^2,xy-z^3+yt^2,y^2-xz)\\[2mm] 
&= k[t,x,y,z]/I_2
{\small \begin{pmatrix}
x & y & z^2 \\[1mm]
y & z &x+t^2
\end{pmatrix}}_.
\end{aligned}
\end{equation*}
Then 
\begin{enumerate}
\item$\tr_{A}(\omega_{A})=(x,y,z,t^2)$. 
\item${\mathcal X}_{A}=\{(x,y,z,t^i)\,|\, i=1,2 \}$. 
\item $A$ is not nearly Gorenstein. 
\end{enumerate}
\end{prop}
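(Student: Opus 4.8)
The plan is to run exactly the argument used for Propositions~\ref{Aprop}--\ref{G1prop}; the matrix for $\Gamma_2$ is obtained from the one for $\Gamma_1$ by interchanging $z$ and $t$ in the third column, so the computations below are entirely parallel to that case.

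For (1), I would apply Lemma~\ref{Burch} to the Hilbert--Burch matrix
\[
\mathbb{M}=\begin{pmatrix} x & y & z^2 \\ y & z & x+t^2 \end{pmatrix},
\]
which gives $\tr_A(\omega_A)=I_1(\mathbb{M})A=(x,y,z^2,z,x+t^2)=(x,y,z,t^2)$. Since $t\notin(x,y,z,t^2)$, this ideal does not contain $\m$, so $A$ is not nearly Gorenstein; this is (3).

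For (2), the inclusion $\mathcal{X}_A\subseteq\{(x,y,z,t),(x,y,z,t^2)\}$ follows from (1) and Proposition~\ref{Ulrich-Can} once one notes that $A/(x,y,z,t^2)\cong k[[t]]/(t^2)$, so the only proper ideals of $A$ containing $\tr_A(\omega_A)$ are $(x,y,z,t^2)$ and $\m$. For the reverse inclusion I would, for each $i\in\{1,2\}$, put $J_i=(x,y,z,t^i)$ and $Q_i=(z,t^i)$ and check the three points: (a) $Q_i$ is a parameter ideal with $e_0(J_i)=\ell(A/Q_i)=3i$, using $A/(z,t^i)\cong k[[t,x,y]]/\bigl(x(x+t^2),y(x+t^2),y^2,t^i\bigr)$ and a direct length count (this is $k[[x,y]]/\m^2$ when $i=1$, and $k[[t]]/(t^2)\otimes_k k[[x,y]]/(x^2,xy,y^2)$ when $i=2$); (b) $J_i^2=Q_iJ_i$, which reduces to $x^2,xy,y^2\in Q_iJ_i$ since all other generators of $J_i^2$ visibly lie in $zJ_i+t^iJ_i$ — and here $y^2=xz$, $x^2=yz^2-xt^2$, $xy=z^3-yt^2$, each of whose terms lies in $zJ_i+t^iJ_i$ because $t^2\in(t^i)$ for $i\le 2$; (c) $\mu(J_i)=4$ and $\ell(A/J_i)=\ell(k[[t]]/(t^i))=i$, so $(\mu(J_i)-1)\ell(A/J_i)=3i=e_0(J_i)$. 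Lemma~\ref{CriterionUlrich} then shows each $J_i$ is an Ulrich ideal, whence $\mathcal{X}_A=\{(x,y,z,t^i)\mid i=1,2\}$.

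The only step that is not a mechanical copy of the $\Gamma_1$ case is step (a): confirming that $(z,t^i)$ is genuinely a minimal reduction of $J_i$ and pinning down $\ell(A/(z,t^i))$. This is a short finite-dimensional computation in the Artinian quotient above, and I do not anticipate any real obstacle; everything else is formal and identical to the earlier propositions.
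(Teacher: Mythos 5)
Your proposal is correct and follows essentially the same route as the paper's proof: Lemma~\ref{Burch} gives (1) and hence (3), Proposition~\ref{Ulrich-Can} gives $\mathcal{X}_A\subseteq\{(x,y,z,t),(x,y,z,t^2)\}$, and the converse uses the same $J_i=(x,y,z,t^i)$, $Q_i=(z,t^i)$ with the same three relations $y^2=xz$, $x^2=yz^2-xt^2$, $xy=z^3-yt^2$ followed by Lemma~\ref{CriterionUlrich}. Your explicit length and multiplicity checks (and the correct entry $x+t^2$ in $I_1(\mathbb{M})$) merely spell out what the paper asserts in one line, so there is nothing to add.
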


\begin{proof}
(1) By Lemma \ref{Burch}, we have  
\[
\tr_A(\omega_{A})=(x,y,t^2, y, z, x+z^2)=(x,y,z,t^2). 
\] 
Moreover, (3) follows from here. 
\par \vspace{1mm}
(2) (1) and Proposition \ref{Ulrich-Can} imply 
${\mathcal X}_A \subset \{(x,y,z,t^{i}) \,|\, (i=1,2) \}$. 
Let us show the converse. 
Fix $i$ with $I=1,2$ and  
put $J_i=(x,y,z,t^{i})$, $Q_i=(t^i, z)$. 
Then it is enough to show that $J_i$ is an Ulrich ideal. 
In fact, one can easily see that $y^2,yz,z^2 \in Q_iJ_i$  and thus 
$J_i^2=Q_iJ_i$ as follows:
\begin{eqnarray*}
x^2 & = & yz^2-xt^2 \in Q_iJ_i.\\
xy &= & z^3-yt^2 \in Q_iJ_i. \\
y^2 & = &  xz \in Q_iJ_i. \\
\end{eqnarray*}
Moreover, we have 
$e_0(J_i)=3i=(\mu(J_i)-1)\ell(A/J_i)$. 
\end{proof}

\par \vspace{3mm}
\begin{prop}  \label{G3prop}
Suppose $H=\Gamma_2$ and 
let $A$ be the $(x,y,z,t)$-adic completion of $R(H)$, where 
\begin{equation*}
\begin{aligned}
R(\Gamma_{3})&=k[t,x,y,z]/(x^2-yt^2-yz^3,xy-zt^2-z^4,y^2-xz)\\[2mm] 
&= k[t,x,y,z]/I_2
{\small \begin{pmatrix}
x & y & t^2+z^3 \\[1mm]
y & z & x\\
\end{pmatrix}}_.
\end{aligned}
\end{equation*}
Then 
\begin{enumerate}
\item$\tr_{A}(\omega_{A})=(x,y,z,t^2)$. 
\item${\mathcal X}_{A}=\{(x,y,z,t^i)\,|\, i=1,2 \}$. 
\item $A$ is not nearly Gorenstein. 
\end{enumerate}
\end{prop}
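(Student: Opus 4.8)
Here $H=\Gamma_3$ (the displayed ring is $R(\Gamma_3)$; the ``$\Gamma_2$'' in the statement is a typo). The plan is to run the same argument already used for $\Gamma_1$ and $\Gamma_2$ in Propositions~\ref{G1prop} and \ref{G2prop}, since the Hilbert--Burch matrix here,
\[
\mathbb{M}={\small\begin{pmatrix} x & y & t^2+z^3 \\ y & z & x \end{pmatrix}},
\]
has exactly the same shape as in those cases. Part (1) is immediate from Lemma~\ref{Burch}, because $A$ is a non-Gorenstein Cohen--Macaulay local domain with $e_0(A)=3$: one gets $\tr_A(\omega_A)=I_1(\mathbb{M})A=(x,y,z,t^2+z^3)A=(x,y,z,t^2)$, the last equality because $z\in I_1(\mathbb{M})$ absorbs the term $z^3$. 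Part (3) then follows at once: $t\notin(x,y,z,t^2)$, so $\m\not\subseteq\tr_A(\omega_A)$ and hence $A$ is not nearly Gorenstein.

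For part (2), the inclusion ${\mathcal X}_A\subseteq\{(x,y,z,t^i)\mid i=1,2\}$ comes from (1) together with Proposition~\ref{Ulrich-Can}: every Ulrich ideal $I$ satisfies $I\supseteq\tr_A(\omega_A)=(x,y,z,t^2)$, and since $A/(x,y,z,t^2)\cong k[[t]]/(t^2)$ the only proper $\m$-primary ideals lying above $(x,y,z,t^2)$ are $(x,y,z,t^2)$ itself and $\m=(x,y,z,t)$.

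For the reverse inclusion, fix $i\in\{1,2\}$, put $J_i=(x,y,z,t^i)$ and $Q_i=(t^i,z)$, and verify $J_i\in{\mathcal X}_A$ by Lemma~\ref{CriterionUlrich}. The ideal $Q_i$ is a parameter ideal, since $A/(t,z)\cong k[[x,y]]/(x^2,xy,y^2)$ is Artinian. From the $2\times 2$ minors of $\mathbb{M}$ one reads $y^2=xz$, $x^2=y(t^2+z^3)$ and $xy=z(t^2+z^3)=zt^2+z^4$; peeling off a factor $z$ from the $z$-terms and writing $t^2=t^i\cdot t^{2-i}$ when $i=2$ and $t^2=t\cdot t$ when $i=1$, one checks $x^2,xy,y^2\in Q_iJ_i=(t^i,z)(x,y,z,t^i)$, while the remaining products of generators visibly lie in $Q_iJ_i$; hence $J_i^2=Q_iJ_i$ and $Q_i$ is a minimal reduction of $J_i$. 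Finally, reducing modulo $Q_i$ gives $A/Q_i\cong k[[t,x,y]]/(x^2-yt^2,\,xy,\,y^2,\,t^i)$, whose length is $3$ for $i=1$ and $6$ for $i=2$; since $\mu(J_i)=4$ and $\ell(A/J_i)=\ell(k[[t]]/(t^i))=i$, we obtain $e_0(J_i)=\ell(A/Q_i)=3i=(\mu(J_i)-1)\,\ell(A/J_i)$, so $J_i$ is an Ulrich ideal.

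There is no essential obstacle here; the argument is a verbatim copy of the $\Gamma_1$ and $\Gamma_2$ cases. The only points meriting a second's attention are (a) that the extra $z^3$ (resp.\ $z^4$) term is harmless both in computing $I_1(\mathbb{M})$ and in checking $x^2,xy\in Q_iJ_i$, and (b) the elementary length count $\ell(A/Q_i)=3i$ for $i=1,2$.
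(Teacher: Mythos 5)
Your proof is correct and follows essentially the same route as the paper: part (1) via Lemma~\ref{Burch}, part (3) as an immediate consequence, the inclusion ${\mathcal X}_A\subseteq\{(x,y,z,t^i)\mid i=1,2\}$ via Proposition~\ref{Ulrich-Can}, and the converse by checking $J_i^2=Q_iJ_i$ with $Q_i=(t^i,z)$ together with $e_0(J_i)=3i=(\mu(J_i)-1)\ell(A/J_i)$ and Lemma~\ref{CriterionUlrich}. Your added details (identifying the "$\Gamma_2$" as a typo for $\Gamma_3$, the explicit length count $\ell(A/Q_i)=3i$, and the observation that only two ideals contain $(x,y,z,t^2)$) are consistent with what the paper leaves implicit.
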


\begin{proof}
(1) By Lemma \ref{Burch}, we have  
\[
\tr_A(\omega_{A})=(x,y,t^2+z^3, y, z, x)=(x,y,z,t^2). 
\] 
Moreover, (3) follows from here. 
\par \vspace{1mm}
(2) (1) and Proposition \ref{Ulrich-Can} imply 
${\mathcal X}_A \subset \{(x,y,z,t^{i}) \,|\, (i=1,2) \}$. 
Let us show the converse. 
Fix $i$ with $I=1,2$ and  
put $J_i=(x,y,z,t^{i})$, $Q_i=(t^i, z)$. 
Then it is enough to show that $J_i$ is an Ulrich ideal. 
In fact, one can easily see that $y^2,yz,z^2 \in Q_iJ_i$  and thus 
$J_i^2=Q_iJ_i$ as follows:
\begin{eqnarray*}
x^2 & = & yt^2+yz \in Q_iJ_i.\\
xy &= & zt^2+z^4 \in Q_iJ_i. \\
y^2 & = &  xz \in Q_iJ_i. \\
\end{eqnarray*}
Moreover, we have 
$e_0(J_i)=3i=(\mu(J_i)-1)\ell(A/J_i)$. 
\end{proof}

\par 
Summarizing the above argument, we can prove the following corollary 
and Theorem \ref{Main-RTP}.

\begin{cor} \label{RTP-NG}
Let $A$ be a rational triple point, 
and let ${\mathcal X}_A$ be  
the set of Ulrich ideals. 
Then 
\begin{enumerate}
\item $\res(A)=\sharp({\mathcal X}_A)$ and 
\[
\res(A)= 
\left\{
\begin{array}{lll}
\ell+1 & \text{if}\; H=A_{\ell,m,n} & (0 \le \ell \le m \le n) \\
\min\{k,m+1\} & \text{if}\; H=B_{m,n} & (n=2k-1, 2k, \; k \ge 2)\\
\min\{2,m+1\} & \text{if}\; H=C_{m,n} & (m \ge 0, n \ge 4) \\
\min\{2,n+1\} & \text{if}\; H=D_{n} & (n \ge 0) \\
\min\{3,n+1\} & \text{if}\; H=F_{n} & (n \ge 0) \\
k & \text{if}\; H=H_{n} & (n =3k-1,3k,3k+1,\, k \ge 2) \\
2 & \text{if}\; H=\Gamma_1,\Gamma_2,\Gamma_3
\end{array}
\right.
\]
\item The following conditions are equivalent. 
\begin{enumerate}
\item ${\mathcal X}_A=\{\m\}$. 
\item $A$ is nearly Gorenstein. 
\item $H$ is one of $A_{0,m,n}$ $(1 \le m \le n)$, 
\, $B_{0,n}$ $(n \ge 3)$,
\, $C_{m,0}$ $(m \ge 4)$, 
\, $D_0$, 
\, $F_0$. 
\end{enumerate}
\end{enumerate}
\end{cor}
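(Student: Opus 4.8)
The plan is to derive Corollary \ref{RTP-NG} by assembling the nine case computations of Propositions \ref{Aprop}--\ref{G3prop}, with a little extra bookkeeping to convert lengths into the stated formulas and equivalences. For part (1), the starting observation is that in each of those propositions the canonical trace ideal came out in the single uniform shape $\tr_A(\omega_A) = (x,y,z,t^a)$, where $x,y,z,t$ is a minimal system of generators of $\m$ and $a \ge 1$ is the integer displayed there ($a = \ell+1$ for $A_{\ell,m,n}$, $a = \min\{k,m+1\}$ for $B_{m,n}$, and so on down to $a = 2$ for $\Gamma_1,\Gamma_2,\Gamma_3$), and that the Ulrich ideals of $A$ are exactly the members of the chain $\{(x,y,z,t^i) \mid 1 \le i \le a\}$. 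The next step is to note that $A/(x,y,z)$ is a quotient of $k[[t]]$ in which none of $t, t^2, \ldots, t^{a-1}$ vanishes --- which is read off from the lowest $t$-order parts of the generators of $I_2(\mathbb{M})$, and is the same length computation already carried out inside the proofs of the propositions --- so that $\ell(A/(x,y,z,t^j)) = j$ for $1 \le j \le a$. Two things follow simultaneously: $\res(A) = \ell(A/\tr_A(\omega_A)) = \ell(A/(x,y,z,t^a)) = a$, and the chain $\m = (x,y,z,t) \supsetneq (x,y,z,t^2) \supsetneq \cdots \supsetneq (x,y,z,t^a) = \tr_A(\omega_A)$ is strictly decreasing, so it has exactly $a$ members and $\sharp({\mathcal X}_A) = a$. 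Hence $\res(A) = \sharp({\mathcal X}_A) = a$, and the displayed table is simply the list of the values of $a$ occurring in Propositions \ref{Aprop}--\ref{G3prop}.

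For part (2), I would first establish the formal equivalence $(a) \Leftrightarrow (b)$. A rational triple point has minimal multiplicity, so $\m \in {\mathcal X}_A$; therefore ${\mathcal X}_A = \{\m\}$ is equivalent to $\sharp({\mathcal X}_A) = 1$, hence by part (1) to $\res(A) = 1$, i.e.\ to $\tr_A(\omega_A)$ being an ideal of colength $1$, i.e.\ to $\tr_A(\omega_A) = \m$. Since $A$ is not Gorenstein we have $\tr_A(\omega_A) \ne A$, so the defining condition $\tr_A(\omega_A) \supset \m$ of near Gorensteinness is equivalent to $\tr_A(\omega_A) = \m$; combining these gives $(a) \Leftrightarrow (b)$. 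The equivalence $(b) \Leftrightarrow (c)$ is then a finite inspection of the table in part (1): $\res(A) = 1$ forces $\ell = 0$ in type $A$, $m = 0$ in types $B$ and $C$ (using $k \ge 2$ in type $B$), and $n = 0$ in types $D$ and $F$, whereas $\res(A) \ge 2$ always holds in types $H_n$ and $\Gamma_i$; conversely each of those parameter choices yields $a = 1$. Collecting the surviving graphs, and discarding the boundary parameter values at which $R(H)$ degenerates to something other than a non-Gorenstein rational triple point, gives exactly the list in $(c)$.

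All the computations are routine; the one point that genuinely uses the shape of the defining equations, rather than merely quoting the earlier propositions, is the colength identity $\ell(A/(x,y,z,t^j)) = j$, equivalently the claim that the exponent $a$ appearing in $\tr_A(\omega_A)$ never exceeds the least $N$ with $t^N \in (x,y,z)A$ (for instance $N = \min\{\ell+m+2, n+2\} \ge \ell+1 = a$ in type $A_{\ell,m,n}$, while the relations all vanish on setting $x=y=z=0$ in types $H_n$ and $\Gamma_i$, so $A/(x,y,z) \cong k[[t]]$ there). I expect this to be the only step meriting care, since it is what guarantees that the chain of candidate Ulrich ideals really does descend, without collapsing, all the way down to $\tr_A(\omega_A)$.
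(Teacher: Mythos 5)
Your proposal is correct and follows essentially the same route as the paper, which obtains the corollary by summarizing Propositions \ref{Aprop}--\ref{G3prop}: in each case $\tr_A(\omega_A)=(x,y,z,t^a)$ and ${\mathcal X}_A=\{(x,y,z,t^i)\mid 1\le i\le a\}$, so $\res(A)=\sharp({\mathcal X}_A)=a$, and the equivalences in (2) follow from part (3) of those propositions. The colength identity $\ell(A/(x,y,z,t^j))=j$ that you single out is exactly the computation already used implicitly in the propositions' proofs (where $\ell(A/J_i)=i$ enters the verification $e_0(J_i)=3i=(\mu(J_i)-1)\ell(A/J_i)$), so your write-up merely makes that step explicit.
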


\section{Quotient singularities}\label{s:nHP}

In this section, we classify all Ulrich ideals in a two-dimensional quotient singularity. 

\par 
Let $A$ be a two-dimensional quotient singularity of $e_0(A)=e \ge 2$. 
If $e=2$, then $A$ is a rational double point. 
In this case, the list of Ulrich ideals of $A$ is given by \cite{GOTWY2}. 
If $e=3$, then $A$ is a rational triple point. 
In this case, the list of Ulrich ideals of $A$ can be seen in the previous section. 
So it is enough to consider the case where $e \ge 4$. 
The following theorem gives an answer in this case. 

\begin{thm} \label{UQuotMult4}
Let $(A,\m)$ be a two-dimensional quotient singularity. 
If $e_0(A) \ge 4$, then $A$ has the unique Ulrich ideal $\m$, that is, 
${\mathcal X}_A=\{\m\}$. 
\end{thm}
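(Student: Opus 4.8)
The plan is to use \lemref{KeyLemma} as the main engine: it suffices to exhibit, on the minimal resolution of any two-dimensional quotient singularity with $e_0(A)\ge 4$, an exceptional curve $E_j$ with $-E_j^2=b_j\ge 3$ and $-E_jZ_0>0$. Since two-dimensional quotient singularities are exactly the two-dimensional rational singularities whose local fundamental group (equivalently, the finite subgroup $G\subset \GL(2,k)$) acts freely off the origin, I would invoke Brieskorn's classification \cite{Br} of the dual graphs of quotient singularities: these are precisely the rational singularities whose dual graph is one of the finite-type graphs $A_n, D_n, E_6, E_7, E_8$ together with the tree-shaped graphs with at most three branches coming from the finite subgroups of $\GL(2,k)$ (cyclic, binary dihedral, binary tetrahedral/octahedral/icosahedral, and their non-special analogues). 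The case $e_0(A)=e-1 = \CMtype(A)$, so $e\ge 4$ means $\CMtype(A)\ge 3$, which forces the dual graph to be genuinely ``non-ADE'': either a star with one central node of high self-intersection, or a graph containing a node with $\ge 3$ neighbors and with a chain of $(-2)$-curves too short to keep the fundamental cycle coefficients balanced.

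The key computational step is to identify, for each family in Brieskorn's list with $e\ge 4$, the fundamental cycle $Z_0$ and a suitable $E_j$. The natural candidate is the central (branching) vertex $E_0$ of the star, whose self-intersection $-b_0$ satisfies $b_0\ge 3$ (otherwise the graph would be ADE or a cyclic quotient of small multiplicity). I would compute $Z_0$ by the usual Laufer algorithm and check that $E_0 Z_0 = -b_0 + (\text{number of adjacent branches, each contributing the }Z_0\text{-coefficient of the neighbor}) < 0$ strictly, i.e. $-E_0 Z_0>0$; equivalently $\m$ is not generated by the images of the branch-tip functions alone, which is reflected in the fact that $Z_0$ has the central coefficient $\ge 2$ for these graphs. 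For the cyclic quotient singularities $\frac{1}{r}(1,q)$ with $e\ge 4$, the dual graph is a chain $[b_1,\dots,b_n]$ with all $b_i\ge 2$ and $\sum(b_i-2) = e-2 \ge 2$; here some $b_j\ge 3$, and since the chain has no branch point, $-E_jZ_0 = b_j-2 >0$ exactly when $b_j\ge 3$, so \lemref{KeyLemma} applies to that vertex. For the non-cyclic (star-shaped) cases, one checks branch by branch using the explicit self-intersection data from Brieskorn that the central node always has either $b_0\ge 3$ with $-E_0Z_0>0$, or, if $b_0=2$, that one of the branch vertices adjacent to the triple point has $b_j\ge 3$ and still meets $Z_0$ positively because the branches attached to it are not long chains of $(-2)$-curves. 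I would organize this as a short finite check across the (few) families.

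The main obstacle I anticipate is the bookkeeping for the non-cyclic quotient singularities where the central vertex happens to have self-intersection $-2$: there the hypothesis of \lemref{KeyLemma} must be met by a non-central vertex, and one has to verify simultaneously that $b_j\ge 3$ and that the $Z_0$-coefficients of $E_j$'s neighbors do not conspire to make $E_jZ_0=0$. Concretely I expect to reduce to Brieskorn's explicit tables: the dihedral type $D_{n,q}$, and the three ``exceptional'' stars of tetrahedral/octahedral/icosahedral type and their non-special variants, each with short branch data, so that the verification that a high-valence or high-weight vertex $E_j$ satisfies $-E_jZ_0>0$ is a direct finite computation. Once that verification is done for every family in Brieskorn's list with $e\ge 4$, \lemref{KeyLemma} immediately gives ${\mathcal X}_A=\{\m\}$, completing the proof; combined with \proref{Aprop}, \proref{Bprop}, \proref{Cprop}, \proref{Dprop}, \proref{Fprop}, \proref{Hprop}, \proref{G1prop}, \proref{G2prop}, \proref{G3prop} for the triple-point cases and \cite{GOTWY2} for the double-point case, this yields the stated classification (and the bound $\sharp({\mathcal X}_A)\le 2$ of \thmref{UQ}).
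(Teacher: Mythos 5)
Your proposal follows essentially the same route as the paper: combine \lemref{KeyLemma} with Brieskorn's classification of the dual graphs of two-dimensional quotient singularities (chains and the star-shaped graphs of types $(2,2,n)$, $(2,3,3)$, $(2,3,4)$, $(2,3,5)$) and verify, family by family via the fundamental cycle, that some vertex $E_j$ has $-E_j^2\ge 3$ and $-E_jZ_0>0$; this is exactly how the paper argues. One correction to your stated dichotomy: when the central curve has $-E_0^2=b=3$ one gets $-E_0Z_0=0$ (its three neighbours carry coefficient $1$ in $Z_0$), so this case must be handled like your $b=2$ case, namely by using $e_0(A)\ge 4$ to produce a non-central vertex of weight $\ge 3$ meeting $Z_0$ strictly negatively --- which is precisely what the paper's case-by-case check does.
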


\par \vspace{2mm}
In what follows, let $A$ be a two-dimensional quotient singularity. 
The weighted dual graph of $A$ is as follows: 

\begin{enumerate}

\item Cyclic quotient singularity ($n \ge 1$)

\begin{picture}(200,60)(-40,0)
    \thicklines
\put(25,28){{\tiny $1$}}
\put(30,22){\circle*{6}}
\put(33,22){\line(1,0){30}}
\put(24,10){{\tiny $-b_n$}}
\put(55,28){{\tiny $1$}}
\put(61,22){\circle*{6}}
\put(50,10){{\tiny $-b_{n-1}$}}
\put(65,22){\line(1,0){20}}
\put(86,19){$\cdots$}
\put(120,28){{\tiny $1$}}
\put(100,22){\line(1,0){21}}
\put(125,22){\circle*{6}}
\put(116,10){{\tiny $-b_2$}}
\put(147,28){{\tiny $1$}}
\put(128,22){\line(1,0){23}}
\put(155,22){\circle*{6}}
\put(146,10){{\tiny $-b_1$}}
\end{picture}

\item Type (2,2,n)  $(n \ge 1)$

\begin{picture}(200,60)(-40,0)
    \thicklines
\put(30,17){\circle*{6}}
\put(33,17){\line(1,0){30}}
\put(24,5){{\tiny $-b_n$}}
%
\put(61,17){\circle*{6}}
\put(50,5){{\tiny $-b_{n-1}$}}
\put(65,17){\line(1,0){20}}
\put(86,14){$\cdots$}
%
\put(100,17){\line(1,0){21}}
\put(125,17){\circle*{6}}
\put(116,5){{\tiny $-b_1$}}
%
\put(128,17){\line(1,0){23}}
\put(155,17){\circle*{6}}
\put(148,5){{\tiny $-b$}}
%
\put(158,17){\line(1,0){24}}
\put(185,17){\circle{6}}
%
\put(155,19){\line(0,1){21}}
\put(155,43){\circle{6}}
\end{picture}

\item Type (2,3,3)

\begin{picture}(200,60)(-30,0)
    \thicklines
\put(-20,45){$\Gamma_1(b)$}
\put(25,17){\circle*{6}}
\put(16,5){{\tiny $-3$}}
%
\put(57,23){{$E_0$}}
\put(28,17){\line(1,0){23}}
\put(55,17){\circle*{6}}
\put(48,5){{\tiny $-b$}}
%
\put(58,17){\line(1,0){24}}
\put(85,17){\circle*{6}}
\put(78,5){{\tiny $-3$}}
\put(82,23){{$F$}}
%
\put(55,19){\line(0,1){21}}
\put(55,43){\circle{6}}

\put(170,45){$\Gamma_2(b)$}
\put(197,17){\line(1,0){24}}
\put(193,17){\circle{6}}

%
\put(225,17){\circle{6}}
%
\put(257,23){{$E_0$}}
\put(228,17){\line(1,0){23}}
\put(255,17){\circle*{6}}
\put(248,5){{\tiny $-b$}}
%
\put(258,17){\line(1,0){24}}
\put(285,17){\circle*{6}}
\put(278,5){{\tiny $-3$}}
\put(282,23){{$F$}}
%
\put(255,19){\line(0,1){21}}
\put(255,43){\circle{6}}
\end{picture}

\begin{picture}(200,60)(0,0)
\thicklines
\put(10,45){$\Gamma_3(b)$}
\put(27,17){\line(1,0){24}}
\put(23,17){\circle{6}}
%
\put(55,17){\circle{6}}
%
\put(87,23){{$E_0$}}
\put(58,17){\line(1,0){23}}
\put(85,17){\circle*{6}}
\put(78,5){{\tiny $-b$}}
%
\put(88,17){\line(1,0){24}}
\put(115,17){\circle{6}}
%
\put(85,19){\line(0,1){21}}
\put(85,43){\circle{6}}
%
\put(118,17){\line(1,0){24}}
\put(145,17){\circle{6}}
\end{picture}

\item Type (2,3,4)

\begin{picture}(200,60)(0,0)
    \thicklines
\put(10,45){$\Gamma_4(b)$}
\put(55,17){\circle*{6}}
\put(46,5){{\tiny $-3$}}
%
\put(87,23){{$E_0$}}
\put(58,17){\line(1,0){23}}
\put(85,17){\circle*{6}}
\put(78,5){{\tiny $-b$}}
%
\put(88,17){\line(1,0){24}}
\put(115,17){\circle*{6}}
\put(108,5){{\tiny $-4$}}
\put(112,23){{$F$}}
%
\put(85,19){\line(0,1){21}}
\put(85,43){\circle{6}}

\put(200,45){$\Gamma_5(b)$}
\put(197,17){\line(1,0){24}}
\put(193,17){\circle{6}}
%
\put(227,17){\line(1,0){24}}
\put(223,17){\circle{6}}
%
\put(255,17){\circle{6}}
%
\put(287,23){{$E_0$}}
\put(258,17){\line(1,0){23}}
\put(285,17){\circle*{6}}
\put(278,5){{\tiny $-b$}}
%
\put(288,17){\line(1,0){24}}
\put(315,17){\circle*{6}}
\put(308,5){{\tiny $-3$}}
\put(312,23){{$F$}}
%
\put(285,19){\line(0,1){21}}
\put(285,43){\circle{6}}
\end{picture}

\begin{picture}(200,60)(0,0)
\thicklines
\put(10,45){$\Gamma_6(b)$}
\put(27,17){\line(1,0){24}}
\put(23,17){\circle{6}}
%
\put(55,17){\circle{6}}
%
\put(87,23){{$E_0$}}
\put(58,17){\line(1,0){23}}
\put(85,17){\circle*{6}}
\put(78,5){{\tiny $-b$}}
%
\put(88,17){\line(1,0){24}}
\put(115,17){\circle*{6}}
\put(108,5){{\tiny $-4$}}
\put(113,23){{$F$}}
%
\put(85,19){\line(0,1){21}}
\put(85,43){\circle{6}}
%
\put(200,45){$\Gamma_7(b)$}
\put(197,17){\line(1,0){24}}
\put(193,17){\circle{6}}
%
\put(227,17){\line(1,0){24}}
\put(223,17){\circle{6}}
%
\put(255,17){\circle{6}}
%
\put(287,23){{$E_0$}}
\put(258,17){\line(1,0){23}}
\put(285,17){\circle*{6}}
\put(278,5){{\tiny $-b$}}
%
\put(288,17){\line(1,0){24}}
\put(315,17){\circle{6}}
%
\put(285,19){\line(0,1){21}}
\put(285,43){\circle{6}}
%
\put(318,17){\line(1,0){24}}
\put(345,17){\circle{6}}
\end{picture}

\item Type(2,3,5)

\begin{picture}(200,60)(-30,0)
    \thicklines
\put(-20,45){$\Gamma_8(b)$}
\put(25,17){\circle*{6}}
\put(16,5){{\tiny $-3$}}
%
\put(57,23){{$E_0$}}
\put(28,17){\line(1,0){23}}
\put(55,17){\circle*{6}}
\put(48,5){{\tiny $-b$}}
%
\put(58,17){\line(1,0){24}}
\put(85,17){\circle*{6}}
\put(78,5){{\tiny $-5$}}
\put(82,23){{$F$}}
%
\put(55,19){\line(0,1){21}}
\put(55,43){\circle{6}}

\put(170,45){$\Gamma_9(b)$}
\put(197,17){\line(1,0){24}}
\put(193,17){\circle{6}}

%
\put(225,17){\circle{6}}
%
\put(257,23){{$E_0$}}
\put(228,17){\line(1,0){23}}
\put(255,17){\circle*{6}}
\put(248,5){{\tiny $-b$}}
%
\put(258,17){\line(1,0){24}}
\put(285,17){\circle*{6}}
\put(278,5){{\tiny $-5$}}
\put(282,23){{$F$}}
%
\put(255,19){\line(0,1){21}}
\put(255,43){\circle{6}}
\end{picture}

\begin{picture}(200,60)(0,0)
\thicklines
\put(10,45){$\Gamma_{10}(b)$}
\put(27,17){\line(1,0){24}}
\put(23,17){\circle{6}}
%
\put(55,17){\circle*{6}}
\put(46,5){{\tiny $-3$}}
%
\put(87,23){{$E_0$}}
\put(58,17){\line(1,0){23}}
\put(85,17){\circle*{6}}
\put(78,5){{\tiny $-b$}}
%
\put(88,17){\line(1,0){24}}
\put(115,17){\circle*{6}}
\put(108,5){{\tiny $-3$}}
\put(112,23){{$F$}}
%
\put(85,19){\line(0,1){21}}
\put(85,43){\circle{6}}
%
\put(200,45){$\Gamma_{11}(b)$}
\put(227,17){\line(1,0){24}}
\put(223,17){\circle{6}}
%
\put(255,17){\circle{6}}
%
\put(287,23){{$E_0$}}
\put(258,17){\line(1,0){23}}
\put(285,17){\circle*{6}}
\put(278,5){{\tiny $-b$}}
%
\put(288,17){\line(1,0){24}}
\put(315,17){\circle*{6}}
\put(308,5){{\tiny $-3$}}
\put(312,23){{$F$}}
%
\put(285,19){\line(0,1){21}}
\put(285,43){\circle{6}}
%
\put(318,17){\line(1,0){24}}
\put(345,17){\circle{6}}
\end{picture}

\begin{picture}(200,60)(0,0)
\thicklines
\put(10,45){$\Gamma_{12}(b)$}
\put(27,17){\line(1,0){24}}
\put(23,17){\circle*{6}}
\put(13,5){{\tiny $-3$}}
%
\put(55,17){\circle{6}}
%
\put(87,23){{$E_0$}}
\put(58,17){\line(1,0){23}}
\put(85,17){\circle*{6}}
\put(78,5){{\tiny $-b$}}
%
\put(88,17){\line(1,0){24}}
\put(115,17){\circle*{6}}
\put(108,5){{\tiny $-3$}}
\put(112,23){{$F$}}
%
\put(85,19){\line(0,1){21}}
\put(85,43){\circle{6}}
%
\put(200,45){$\Gamma_{13}(b)$}
\put(227,17){\line(1,0){24}}
\put(223,17){\circle{6}}
%
\put(255,17){\circle{6}}
%
\put(287,23){{$E_0$}}
\put(258,17){\line(1,0){23}}
\put(285,17){\circle*{6}}
\put(278,5){{\tiny $-b$}}
%
\put(288,17){\line(1,0){24}}
\put(315,17){\circle{6}}
%
\put(285,19){\line(0,1){21}}
\put(285,43){\circle{6}}
%
\put(318,17){\line(1,0){24}}
\put(345,17){\circle*{6}}
\put(338,5){{\tiny $-3$}}
\put(342,23){{$F$}}
\end{picture}

\begin{picture}(200,60)(80,0)
\thicklines
\put(90,45){$\Gamma_{14}(b)$}
\put(47,17){\line(1,0){24}}
\put(43,17){\circle{6}}
%
\put(77,17){\line(1,0){24}}
\put(73,17){\circle{6}}
%
\put(107,17){\line(1,0){24}}
\put(103,17){\circle{6}}
%
\put(135,17){\circle{6}}
%
\put(167,23){{$E_0$}}
\put(138,17){\line(1,0){23}}
\put(165,17){\circle*{6}}
\put(158,5){{\tiny $-b$}}
%
\put(168,17){\line(1,0){24}}
\put(195,17){\circle*{6}}
\put(188,5){{\tiny $-3$}}
\put(192,23){{$F$}}
%
\put(165,19){\line(0,1){21}}
\put(165,43){\circle{6}}
%
\put(280,45){$\Gamma_{15}(b)$}
\put(247,17){\line(1,0){24}}
\put(243,17){\circle{6}}
%
\put(277,17){\line(1,0){24}}
\put(273,17){\circle{6}}
%
\put(307,17){\line(1,0){24}}
\put(303,17){\circle{6}}
%
\put(335,17){\circle{6}}
%
\put(367,23){{$E_0$}}
\put(338,17){\line(1,0){23}}
\put(365,17){\circle*{6}}
\put(358,5){{\tiny $-b$}}
%
\put(368,17){\line(1,0){24}}
\put(395,17){\circle{6}}
%
\put(365,19){\line(0,1){21}}
\put(365,43){\circle{6}}
%
\put(398,17){\line(1,0){24}}
\put(425,17){\circle{6}}
\end{picture}

\end{enumerate}
\bigskip
Proof of Theorem \ref{UQuotMult4}. 
\par \vspace{1mm} \par \noindent 
{\bf Case 1. Cyclic Quotient Singularity} 
\par 
By \cite[Theorem A]{CS}, any cyclic quotient singularity (of dimension $2$) is a nearly Gorenstein ring, and thus ${\mathcal X}_A=\{\m\}$. 
Indeed, we can prove ${\mathcal X}_A=\{\m\}$ directly. 
The fundamental cycle is reduced, that is, $Z_0=E_1+E_2+\cdots+E_n$ 
with $E_i^2=-b_i$ $(i=1,2,\ldots,n)$ as follows. 
\par \vspace{2mm}
\begin{picture}(200,60)(-30,0)
    \thicklines
\put(-10,20){$Z_0=$}
\put(25,28){{\tiny $1$}}
\put(30,22){\circle*{6}}
\put(33,22){\line(1,0){30}}
\put(24,10){{\tiny $-b_n$}}
\put(55,28){{\tiny $1$}}
\put(61,22){\circle*{6}}
\put(50,10){{\tiny $-b_{n-1}$}}
\put(65,22){\line(1,0){20}}
\put(86,19){$\cdots$}
\put(120,28){{\tiny $1$}}
\put(100,22){\line(1,0){21}}
\put(125,22){\circle*{6}}
\put(116,10){{\tiny $-b_2$}}
\put(147,28){{\tiny $1$}}
\put(128,22){\line(1,0){23}}
\put(155,22){\circle*{6}}
\put(146,10){{\tiny $-b_1$}}
\end{picture}
\par \vspace{2mm} \par \noindent
As $e_0(A) \ge 3$, we can take an integer $j \ge 1$ with 
$-E_j^2=b_j \ge 3$. 
Then $-E_jZ_0 \ge b_j -2 > 0$. 
Thus Lemma \ref{KeyLemma} yields ${\mathcal X}_A=\{\m\}$.  

\medskip \par \noindent 
{\bf Case 2. Type (2,2,n).} 
Let $E_0$ be the central curve with $-E_0^2=b$. 
\par \vspace{2mm}
When $b \ge 4$, the fundamental cycle is as follows: 

\par \vspace{2mm}
\begin{picture}(200,60)(-40,0)
    \thicklines
\put(-10,17){$Z_0=$}
\put(25,23){{\tiny $1$}}
\put(30,17){\circle*{6}}
\put(33,17){\line(1,0){30}}
\put(24,5){{\tiny $-b_n$}}
\put(55,23){{\tiny $1$}}
\put(61,17){\circle*{6}}
\put(50,5){{\tiny $-b_{n-1}$}}
\put(65,17){\line(1,0){20}}
\put(86,14){$\cdots$}
\put(120,23){{\tiny $1$}}
\put(100,17){\line(1,0){21}}
\put(125,17){\circle*{6}}
\put(116,5){{\tiny $-b_1$}}
\put(147,23){{\tiny $1$}}
\put(128,17){\line(1,0){23}}
\put(155,17){\circle*{6}}
\put(148,5){{\tiny $-b$}}
\put(177,23){{\tiny $1$}}
\put(158,17){\line(1,0){24}}
\put(185,17){\circle{6}}
\put(146,43){{\tiny $1$}}
\put(155,19){\line(0,1){21}}
\put(155,43){\circle{6}}
\end{picture}
\par \vspace{2mm}\par \noindent 
Since $-E_0Z_0=b-3 > 0$, the assertion follows from 
Lemma \ref{KeyLemma}.  
\par \vspace{2mm}
When $b=3$, $Z_0$ is the same shape as in the above case. 
As $e_0(A) \ge 4$ by assumption, there exists an integer $j \ge 1$
such that $-E_j^2=b_j \ge 3$. 
Then $-E_jZ_0 \ge b_j-2 >0$. 
Hence  the assertion follows from 
Lemma \ref{KeyLemma}.  
\par \vspace{2mm}
When $b=2$, let $j \ge 1$ be an integer 
such that $b_1=\cdots = b_{j-1}=2$ and $b_j \ge 3$.  
Now suppose that $b_j=3$. Then there exists an integer 
$k > j$ such that $b_k \ge 3$. 
Then $Z_0$ is the following shape: 
\par \vspace{2mm}
\begin{picture}(200,60)(-40,0)
    \thicklines
\put(-10,17){$Z_0=$}
\put(25,23){{\tiny $1$}}
\put(30,17){\circle*{6}}
\put(33,17){\line(1,0){10}}
\put(46,14){$\cdots$}
\put(24,5){{\tiny $-b_n$}}
\put(65,23){{\tiny $1$}}
\put(71,17){\circle*{6}}
\put(60,5){{\tiny $-b_k$}}
\put(60,17){\line(1,0){25}}
\put(86,14){$\cdots$}
\put(120,23){{\tiny $1$}}
\put(100,17){\line(1,0){21}}
\put(125,17){\circle*{6}}
\put(116,5){{\tiny $-b_j$}}
\put(150,23){{\tiny $2$}}
\put(130,17){\line(1,0){21}}
\put(155,17){\circle*{6}}
\put(146,5){{\tiny $-b_{j-1}$}}
\put(185,23){{\tiny $2$}}
\put(158,17){\line(1,0){6}}
\put(165,14){$\cdots$}
\put(181,17){\line(1,0){6}}
\put(190,17){\circle*{6}}
\put(181,5){{\tiny $-b_1$}}
\put(212,23){{\tiny $2$}}
\put(193,17){\line(1,0){23}}
\put(220,17){\circle*{6}}
\put(213,5){{\tiny $-b$}}
\put(242,23){{\tiny $1$}}
\put(223,17){\line(1,0){24}}
\put(250,17){\circle{6}}
\put(211,43){{\tiny $1$}}
\put(220,19){\line(0,1){21}}
\put(220,43){\circle{6}}
\end{picture}
\par \vspace{2mm} \par \noindent 
Then $-E_k^2=b_k \ge 3$ and $-E_kZ_0 \ge b_k-2 >0$. 
\par \vspace{1mm}
Otherwise, $b_j \ge 4$. 
Then $-E_j^2=b_j \ge 3$ and 
$-E_jZ_0 =b_j -3 > 0$. 
In both cases, we have ${\mathcal X}_A=\{\m\}$ by Lemma \ref{KeyLemma}.

\medskip \par \noindent 
{\bf Case 3. Type (2,3,3), (2,3,4), (2,3,5).} 
~ \par \vspace{1mm}
Let $E_0$ be the central curve with $-E_0^2=b$. 
\par \vspace{2mm}
When $b \ge 4$, the fundamental cycle $Z_0$ is given by the following shape:

\par \vspace{2mm}
\begin{picture}(200,60)(-40,0)
    \thicklines
\put(55,23){{\tiny $1$}}
\put(61,17){\circle*{6}}
%
\put(65,17){\line(1,0){20}}
\put(86,14){$\cdots$}
\put(115,23){{\tiny $1$}}
\put(100,17){\line(1,0){21}}
\put(125,17){\circle*{6}}
\put(116,5){{\tiny $-b$}}
%
\put(128,17){\line(1,0){23}}
\put(154,14){$\cdots$}
\put(190,23){{\tiny $1$}}
\put(170,17){\line(1,0){24}}
\put(195,17){\circle*{6}}
\put(115,43){{\tiny $1$}}
\put(125,19){\line(0,1){21}}
\put(125,43){\circle{6}}
\end{picture}
\par \vspace{2mm} \par \noindent 
Then since $-E_0^2=b \ge 3$ and $-E_0Z_0=b-3 >0$, we obtain 
${\mathcal X}_A=\{\m\}$ by Lemma \ref{KeyLemma}. 
\par \vspace{2mm}
When $b=3$, 
the fundamental cycle $Z_0$ is given by the following shape:

\par \vspace{2mm}
\begin{picture}(200,60)(-40,0)
    \thicklines
\put(55,23){{\tiny $1$}}
\put(61,17){\circle*{6}}
%
\put(65,17){\line(1,0){20}}
\put(86,14){$\cdots$}
\put(115,23){{\tiny $1$}}
\put(100,17){\line(1,0){21}}
\put(125,17){\circle*{6}}
\put(116,5){{\tiny $-3$}}
%
\put(128,17){\line(1,0){23}}
\put(154,14){$\cdots$}
\put(190,23){{\tiny $1$}}
\put(170,17){\line(1,0){24}}
\put(195,17){\circle*{6}}
\put(115,43){{\tiny $1$}}
\put(125,19){\line(0,1){21}}
\put(125,43){\circle{6}}
\end{picture}
\par \vspace{2mm}\par \noindent
Since $e_0(A) \ge 4$, it is enough to consider the graph 
$\Gamma_i(3)$ for $i \ne 3,7,15$. 
Let $F$ be the curve which appears in the table. 
If $i=11$, then $-F^2 =3$ and $-FZ_0=1 >0$. 
Otherwise, $-F^2 \ge 3$ and $-FZ_0=-F^2-1 >0$. 
Hence we obtain ${\mathcal X}_A=\{\m\}$ by Lemma \ref{KeyLemma}. 
\par \vspace{2mm}
When $b=2$, it is enough to consider the graph $\Gamma_i(2)$ for 
$i=1,4,6,8,9,10,12$ only.   

\par \vspace{3mm}
\begin{picture}(200,60)(0,0)
    \thicklines
\put(-20,45){$\Gamma_1(2)$}
\put(20,23){{\tiny $1$}}
\put(25,17){\circle*{6}}
\put(16,5){{\tiny $-3$}}
\put(47,23){{\tiny $2$}}
\put(28,17){\line(1,0){23}}
\put(55,17){\circle{6}}
\put(48,5){{\tiny $-2$}}
\put(77,23){{\tiny $1$}}
\put(58,17){\line(1,0){24}}
\put(85,17){\circle*{6}}
\put(78,5){{\tiny $-3$}}
\put(82,23){{$F$}}
\put(46,43){{\tiny $1$}}
\put(55,19){\line(0,1){21}}
\put(55,43){\circle{6}}
\end{picture}
\begin{picture}(200,60)(60,0)
    \thicklines
\put(10,45){$\Gamma_4(2)$}
\put(50,23){{\tiny $1$}}
\put(55,17){\circle*{6}}
\put(46,5){{\tiny $-3$}}
\put(77,23){{\tiny $2$}}
\put(58,17){\line(1,0){23}}
\put(85,17){\circle{6}}
\put(78,5){{\tiny $-2$}}
\put(107,23){{\tiny $1$}}
\put(88,17){\line(1,0){24}}
\put(115,17){\circle*{6}}
\put(108,5){{\tiny $-4$}}
\put(112,23){{$F$}}
\put(76,43){{\tiny $1$}}
\put(85,19){\line(0,1){21}}
\put(85,43){\circle{6}}
\end{picture}
\begin{picture}(200,60)(90,0)
\thicklines
\put(10,45){$\Gamma_6(2)$}
\put(18,23){{\tiny $1$}}
\put(27,17){\line(1,0){24}}
\put(23,17){\circle{6}}

\put(50,23){{\tiny $2$}}
\put(55,17){\circle{6}}

\put(77,23){{\tiny $2$}}
\put(58,17){\line(1,0){23}}
\put(85,17){\circle{6}}
\put(78,5){{\tiny $-2$}}
\put(107,23){{\tiny $1$}}
\put(88,17){\line(1,0){24}}
\put(115,17){\circle*{6}}
\put(108,5){{\tiny $-4$}}
\put(113,23){{$F$}}
\put(76,43){{\tiny $1$}}
\put(85,19){\line(0,1){21}}
\put(85,43){\circle{6}}
\end{picture}
\par \vspace{2mm}
\begin{picture}(200,60)(0,0)
    \thicklines
\put(-20,45){$\Gamma_8(2)$}
\put(20,23){{\tiny $1$}}
\put(25,17){\circle*{6}}
\put(16,5){{\tiny $-3$}}
\put(47,23){{\tiny $2$}}
\put(28,17){\line(1,0){23}}
\put(55,17){\circle{6}}
\put(48,5){{\tiny $-2$}}
\put(77,23){{\tiny $1$}}
\put(58,17){\line(1,0){24}}
\put(85,17){\circle*{6}}
\put(78,5){{\tiny $-5$}}
\put(82,23){{$F$}}
\put(46,43){{\tiny $1$}}
\put(55,19){\line(0,1){21}}
\put(55,43){\circle{6}}
\end{picture}
\begin{picture}(200,60)(120,0)
    \thicklines
\put(70,45){$\Gamma_9(2)$}
\put(88,23){{\tiny $1$}}
\put(97,17){\line(1,0){24}}
\put(93,17){\circle{6}}

%
\put(120,23){{\tiny $2$}}
\put(125,17){\circle{6}}
%
\put(147,23){{\tiny $2$}}
\put(128,17){\line(1,0){23}}
\put(155,17){\circle{6}}
\put(148,5){{\tiny $-2$}}
\put(177,23){{\tiny $1$}}
\put(158,17){\line(1,0){24}}
\put(185,17){\circle*{6}}
\put(178,5){{\tiny $-5$}}
\put(182,23){{$F$}}
\put(146,43){{\tiny $1$}}
\put(155,19){\line(0,1){21}}
\put(155,43){\circle{6}}
\end{picture}
\begin{picture}(200,60)(90,0)
\thicklines
\put(10,45){$\Gamma_{10}(2)$}
\put(18,23){{\tiny $1$}}
\put(27,17){\line(1,0){24}}
\put(23,17){\circle{6}}
%
\put(50,23){{\tiny $1$}}
\put(55,17){\circle*{6}}
\put(46,5){{\tiny $-3$}}
\put(77,23){{\tiny $2$}}
\put(58,17){\line(1,0){23}}
\put(85,17){\circle{6}}
\put(78,5){{\tiny $-2$}}
\put(108,23){{\tiny $1$}}
\put(88,17){\line(1,0){24}}
\put(115,17){\circle*{6}}
\put(108,5){{\tiny $-3$}}
\put(113,23){{$F$}}
\put(76,43){{\tiny $1$}}
\put(85,19){\line(0,1){21}}
\put(85,43){\circle{6}}
\end{picture}
\par \vspace{2mm}
\begin{picture}(200,60)(0,0)
\thicklines
\put(10,45){$\Gamma_{12}(2)$}
\put(18,23){{\tiny $1$}}
\put(27,17){\line(1,0){24}}
\put(23,17){\circle*{6}}
\put(13,5){{\tiny $-3$}}
\put(50,23){{\tiny $2$}}
\put(55,17){\circle{6}}
%
\put(77,23){{\tiny $2$}}
\put(58,17){\line(1,0){23}}
\put(85,17){\circle{6}}
\put(78,5){{\tiny $-2$}}
\put(107,23){{\tiny $1$}}
\put(88,17){\line(1,0){24}}
\put(115,17){\circle*{6}}
\put(108,5){{\tiny $-3$}}
\put(112,23){{$F$}}
\put(76,43){{\tiny $1$}}
\put(85,19){\line(0,1){21}}
\put(85,43){\circle{6}}
\end{picture}
\par \vspace{2mm}
In each graph, we have $-F^2 \ge 3$ and $-FZ_0 = -F^2-2 > 0$. 
Hence ${\mathcal X}_A=\{\m\}$ by Lemma \ref{KeyLemma}.

\par \vspace{5mm}
\begin{cor} \label{QuotUlrich}
Any two-dimensional quotient singularity
$A$ has at most two Ulrich ideals. 
Moreover, $\sharp(\mathcal{X}_A)\le 2$. 
\end{cor}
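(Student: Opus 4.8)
The plan is to split according to the multiplicity $e=e_0(A)\ge 2$ and to combine \thmref{UQuotMult4} with the classifications of Ulrich ideals on rational double and triple points that are already available. If $e\ge 4$, then \thmref{UQuotMult4} gives $\mathcal{X}_A=\{\m\}$, so $\sharp(\mathcal{X}_A)=1$ and there is nothing more to prove.

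Suppose next that $e=3$, so that $A$ is a rational triple point. Since $A$ is at the same time a quotient singularity, the link of $A$ has finite fundamental group; hence, by Brieskorn's classification \cite{Br}, the weighted dual graph of the minimal resolution is star-shaped with at most three branches, and its branch data satisfies the spherical orbifold inequality $1/\alpha_1+1/\alpha_2+1/\alpha_3>1$. Matching this constraint against the Artin--Tyurina list \cite{Ar,Tyu} recalled in Section 3, one sees that only graphs $H$ with short branches can occur: for instance in type $A_{\ell,m,n}$ the inequality forces $\ell\le 1$, and the long-tailed families $F_n$, $H_n$ are cut down to a few small values of $n$. For each admissible graph $H$, \corref{RTP-NG}(1) then yields $\sharp(\mathcal{X}_A)=\res(A)\le 2$.

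Finally, if $e=2$ then $A$ is a rational double point, and the bound $\sharp(\mathcal{X}_A)\le 2$ is obtained by inspecting the classification of Ulrich ideals on rational double points in \cite[Theorem 1.4]{GOTWY2} recalled above.

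The only step that requires genuine work is the case $e=3$: one must carry out the bookkeeping of intersecting Brieskorn's list of quotient-singularity dual graphs with the Artin--Tyurina list of rational triple points, and then verify $\res(A)\le 2$ for each resulting graph using the table in \corref{RTP-NG}(1). The substantive new ingredient, the case $e\ge 4$, has already been supplied by \thmref{UQuotMult4}, so the proof of the corollary amounts to assembling these three cases.
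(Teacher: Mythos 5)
For $e_0(A)\ge 4$ and $e_0(A)=3$ your route is the same as the paper's: quote Theorem~\ref{UQuotMult4} when $e_0(A)\ge 4$, and when $e_0(A)=3$ intersect Brieskorn's list of dual graphs of quotient singularities with the Artin--Tyurina list of rational triple points and read off $\res(A)\le 2$ from Corollary~\ref{RTP-NG}(1). The paper actually carries out the bookkeeping you defer --- after first disposing of the nearly Gorenstein case it identifies, type by type, which graphs occur (type $(2,2,n)$ gives $A_{1,1,n}$, $B_{n-1,3}$ or $C_{n+2-k,k}$; type $(2,3,3)$ gives $A_{1,2,2}$; type $(2,3,4)$ gives $A_{1,2,3}$; type $(2,3,5)$ gives $B_{1,4}$ or $A_{1,2,4}$) and checks $\sharp(\mathcal{X}_A)=2$ in each case --- but in substance this part of your plan matches the paper's argument.

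The genuine problem is your third case. You assert that for $e_0(A)=2$ the bound $\sharp(\mathcal{X}_A)\le 2$ ``is obtained by inspecting'' the classification of Ulrich ideals on rational double points in \cite[Theorem 1.4]{GOTWY2}; that classification, recalled in Section 2 of the paper, shows the opposite: $(A_{2m})$ has $m$ Ulrich ideals, $(D_{2m})$ has $m+2$, and $(E_7)$ has $3$, so for Gorenstein quotient singularities the number of Ulrich ideals is unbounded. The paper's own proof never touches $e_0(A)=2$: it begins with ``By Theorem~\ref{UQuotMult4} we may assume $e_0(A)=3$,'' i.e., the corollary is to be read for non-Gorenstein quotient singularities (multiplicity at least $3$), the rational double point case having been settled --- with a different answer --- in \cite{GOTWY2}. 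As written, your $e_0(A)=2$ step is false, and it cannot be repaired except by excluding that case from the statement, which is what the paper implicitly does; you should either drop that case or note explicitly that the bound fails there.
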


\begin{proof}
By Theorem \ref{UQuotMult4}, we may assume $e_0(A) =3$.  
Moreover, we may assume that $A$ is not nearly Gorenstein. 
\par 
Let $H$ be the weighted dual graph of the minimal resolution.  
If $H$ is type $(2,2,n)$, then $H=A_{1,1,n}$ $(n \ge 1)$, 
$B_{n-1,3}$ $(n \ge 1)$ or $C_{n+2-k,k}$ $(k \ge 4, n \ge k-1)$. 
Then Corollary \ref{RTP-NG} implies $\sharp({\mathcal X}_A) =2$. 
\par 
If $H$ has type $(2,3,3)$, then $H=\Gamma_3(3)$, 
that is, $H=A_{1,2,2}$. 
Then Corollary \ref{RTP-NG} implies $\sharp({\mathcal X}_A) =2$. 
\par 
If $H$ has type $(2,3,4)$, then $H=\Gamma_7(3)$, 
that is, $H=A_{1,2,3}$. 
Then Corollary \ref{RTP-NG} implies $\sharp({\mathcal X}_A) =2$. 
\par 
If $H$ has type $(2,3,5)$, then $H=\Gamma_{11}(2)$ or 
$\Gamma_{15}(3)$, that is, $H=B_{1,4}$ or $A_{1,2,4}$. 
Then Corollary \ref{RTP-NG} implies $\sharp({\mathcal X}_A) =2$. 
\end{proof}

\section{Examples}

In this section, we give several examples.  
The next example is not only a rational triple point but also 
a quotient singularity. 

\begin{exam} \label{Sample}
Suppose $H=A_{1,2,3}$ and 
let $A$ be the $(x,y,z,t)$-adic completion of $R(H)$, where 
\begin{equation*}
\begin{aligned}
R(H)&= k[t,x,y,z]/(xy-t^{5},xz-t^{6}-zt^2, yz+yt^{4}-zt^{3})\\ 
&= k[t,x,y,z]/I_2
\begin{pmatrix}
x & t^3& t^4+z \\[2mm]
t^2 & y & z \\
\end{pmatrix}_.
\end{aligned}
\end{equation*}
Then $A$ has two Ulrich ideals: 
$I_0=\m=(x,y,z,t)=I_{Z_0}$ and 
$I_1=\tr_A(\omega_A)=(x,y,z,t^2)=I_{Z_1}$, where 
$Z_0,Z_1$ as follows. 

\par \vspace{2mm}
\begin{picture}(200,60)(65,0)
\thicklines
\put(60,20){$Z_0=$}
\put(100,45){$\Gamma_7(3)$}
\put(88,23){{\tiny $1$}}
\put(97,17){\line(1,0){24}}
\put(93,17){\circle{6}}
%
\put(118,23){{\tiny $1$}}
\put(127,17){\line(1,0){24}}
\put(123,17){\circle{6}}
%
\put(150,23){{\tiny $1$}}
\put(155,17){\circle{6}}
%
\put(177,23){{\tiny $1$}}
\put(158,17){\line(1,0){23}}
\put(185,17){\circle*{6}}
\put(178,5){{\tiny $-3$}}
\put(207,23){{\tiny $1$}}
\put(188,17){\line(1,0){24}}
\put(215,17){\circle{6}}
%
\put(176,43){{\tiny $1$}}
\put(185,19){\line(0,1){21}}
\put(185,43){\circle{6}}
\put(237,23){{\tiny $1$}}
\put(218,17){\line(1,0){24}}
\put(245,17){\circle{6}}
\end{picture}
\begin{picture}(200,60)(30,0)
\thicklines
\put(60,20){$Z_1=$}
\put(100,45){$\Gamma_7(3)$}
\put(88,23){{\tiny $1$}}
\put(97,17){\line(1,0){24}}
\put(93,17){\circle{6}}
%
\put(118,23){{\tiny $2$}}
\put(127,17){\line(1,0){24}}
\put(123,17){\circle{6}}
%
\put(150,23){{\tiny $2$}}
\put(155,17){\circle{6}}
%
\put(177,23){{\tiny $2$}}
\put(158,17){\line(1,0){23}}
\put(185,17){\circle*{6}}
\put(178,5){{\tiny $-3$}}
\put(207,23){{\tiny $2$}}
\put(188,17){\line(1,0){24}}
\put(215,17){\circle{6}}
%
\put(176,43){{\tiny $1$}}
\put(185,19){\line(0,1){21}}
\put(185,43){\circle{6}}
\put(237,23){{\tiny $1$}}
\put(218,17){\line(1,0){24}}
\put(245,17){\circle{6}}
\end{picture}
\end{exam}

\par \vspace{3mm}
The following normal local domain is not rational. 
But a similar result as in Theorem \ref{Main-RTP} holds true. 

\begin{exam}[\textrm{cf. \cite[Example 7.5]{OWY2}}] 
\label{Non-Rat-Ex}
Let $A=\mathbb{C}[[x,y,z,t]]/I_2(\mathbb{M})$, where 
\[
\mathbb{M}=\left(
\begin{array}{ccc}
x & y & z \\
y & z & x^2-t^3
\end{array}
\right). 
\]
Then $A$ is a two-dimensional normal local domain with $e_0(A)=3$. 
\begin{enumerate}
\item $A$ is \textit{not} a rational singularity. Indeed, $p_g(A)=1$. 
\item $\tr(\omega_A)=(x,y,z,t^3)$ by Lemma \ref{Burch}. 
\item ${\mathcal X}_A=\{(x,y,z,t^i) \,|\, i=1,2,3\}$. 
In fact, if we put $J_i=(x,y,z,t^i), Q_i=(x,t^i)$ for each $i=1,2,3$, 
then $J_i^2=Q_iJ_i$ and $e_0(J_i)=3i=(\mu(J_i)-1)\ell(A/J_i)$. 
Hence it follows from Lemma \ref{CriterionUlrich} that 
$J_i$ is an Ulrich ideal for every $i=1,2,3$. 
\end{enumerate}
\end{exam}

\par \vspace{3mm}
Using the criterion given by T.~Okuma and K.-i.~Watanabe, 
we can show that the following example $A$ is not nearly Gorenstein. 
Hence  
\lq\lq ${\mathcal X}_A=\{\m\}$'' does not necessarily  imply that \lq\lq $A$ is nearly Gorenstein'' when $e_0(A) \ge 4$.  

\begin{exam}[\textrm{cf. \cite{Ri}}] \label{Non-NG}
Let $A=\mathbb{C}[[z_1,z_2,z_3,z_4,z_5]]/I_2(\mathbb{M})$, where 
\[
\mathbb{M}=
\left(
\begin{array}{cccc}
z_1 & z_4 & z_2 & z_3^2 \\
z_2 & z_3 & z_4 & z_5 
\end{array}
\right). 
\]
Then $A$ is a two-dimensional quotient singularity with $e_0(A)=4$ 
and ${\mathcal X}_A=\{\m\}$ but \textit{not} nearly Gorenstein. 
\par
Let $Z$ be the anti-nef cycle corresponding to $J=\tr_A(\omega_A)$. 
Then one can easily see that $\ell(A/J)=2$, $e_0(J)=7$ and $\mu(J)=5$. In particular, $J$ is \textit{not} an Ulrich ideal. 

\par \vspace{3mm}
\begin{picture}(200,60)(-20,0)
\thicklines
\put(-10,20){$Z_0=$}
\put(10,45){$\Gamma_{10}(2)$}
\put(18,23){{\tiny $1$}}
\put(27,17){\line(1,0){24}}
\put(23,17){\circle{6}}
%
\put(50,23){{\tiny $1$}}
\put(55,17){\circle*{6}}
\put(46,5){{\tiny $-3$}}
\put(77,23){{\tiny $2$}}
\put(58,17){\line(1,0){23}}
\put(85,17){\circle{6}}
%
\put(108,23){{\tiny $1$}}
\put(88,17){\line(1,0){24}}
\put(115,17){\circle*{6}}
\put(108,5){{\tiny $-3$}}
%
\put(76,43){{\tiny $1$}}
\put(85,19){\line(0,1){21}}
\put(85,43){\circle{6}}
\end{picture}
\begin{picture}(200,60)(-30,0)
\thicklines
\put(-10,20){$Z=$}
\put(18,23){{\tiny $1$}}
\put(27,17){\line(1,0){24}}
\put(23,17){\circle{6}}
%
\put(50,23){{\tiny $2$}}
\put(55,17){\circle*{6}}
\put(46,5){{\tiny $-3$}}
\put(77,23){{\tiny $2$}}
\put(58,17){\line(1,0){23}}
\put(85,17){\circle{6}}
%
\put(108,23){{\tiny $1$}}
\put(88,17){\line(1,0){24}}
\put(115,17){\circle*{6}}
\put(108,5){{\tiny $-3$}}
%
\put(76,43){{\tiny $1$}}
\put(85,19){\line(0,1){21}}
\put(85,43){\circle{6}}
\end{picture}
\end{exam}

\par \vspace{2mm}
Let $A$ be a two-dimensional non-Gorenstein rational singularity. 
Then for any Ulrich ideal $I$ in $A$, one has that $A/I$ is Gorenstein;  see \cite[Corollary 6.5]{GOTWY2}. 
So it is natural to ask the following question.

\begin{quest}
Let $A$ be a two-dimensional non-Gorenstein rational singularity. 
Then is $A/\tr(\omega_A)$ Gorenstein?  
\end{quest}

\begin{acknowledgement}
The authors thank Tomohiro Okuma, Kei-ichi Watanabe 
and Masataka 
Tomari for many valuable comments. 
\end{acknowledgement}



\end{document}